\documentclass[11pt]{amsart}

\usepackage{amsfonts,amssymb,amsthm,eucal,color,bbm,verbatim,enumitem,graphicx,mleftright}
\usepackage[french,english]{babel}

\usepackage[margin=1.25in]{geometry}

\newtheorem{thm}{Theorem}

\newtheorem{lemma}[thm]{Lemma}
\newtheorem{prop}[thm]{Proposition}

\newcommand{\R}{\mathbb{R}}
\newcommand{\E}{\mathbb{E}}

\newcommand{\Prob}{\mathbb{P}}
\newcommand{\N}{\mathbb{N}}
\newcommand{\n}{\mathcal{N}}
\newcommand{\Z}{\mathbb{Z}}
\newcommand{\C}{\mathbb{C}}

\DeclareMathOperator{\supp}{supp}

\renewcommand{\Re}{\operatorname{Re}}
\renewcommand{\Im}{\operatorname{Im}}
\newcommand{\abs}[1]{\left\vert #1 \right\vert}
\newcommand{\norm}[1]{\left\Vert #1 \right\Vert}

\newcommand{\eps}{\varepsilon}

\DeclareMathOperator{\var}{Var}

\DeclareMathOperator{\Lip}{Lip}

\DeclareMathOperator{\sgn}{sgn}
\newcommand{\Set}[2]{\left\{#1 \mathrel{} \middle| \mathrel{} #2
  \right\}}
\newcommand{\ball}{\mathcal{B}}

\newcommand{\Unitary}[1]{\mathbb{U}\left(#1\right)}
\newcommand{\SUnitary}[1]{\mathbb{SU}\left(#1\right)}
\newcommand{\Orthogonal}[1]{\mathbb{O}\left(#1\right)}
\newcommand{\SOrthogonal}[1]{\mathbb{SO}\left(#1\right)}
\newcommand{\Symplectic}[1]{\mathbb{S}\mathbbm{p}\left(2 #1\right)}
\newcommand{\Circle}{\mathbb{S}^1}

\newcommand{\Mat}[2]{\operatorname{M}_{ #1} \mleft( #2
  \mright)} 
\newcommand{\ind}[1]{\mathbbm{1}_{#1}}

\allowdisplaybreaks[3]

\author{Elizabeth S.\ Meckes and Mark W.\ Meckes}

\address{Department of Mathematics, Applied Mathematics, and
  Statistics, Case Western Reserve University, 10900 Euclid Ave.,
  Cleveland, Ohio 44106, U.S.A.}

\email{elizabeth.meckes@case.edu}

\address{Department of Mathematics, Applied Mathematics, and
  Statistics, Case Western Reserve University, 10900 Euclid Ave.,
  Cleveland, Ohio 44106, U.S.A.}

\email{mark.meckes@case.edu}

\title{Rates of convergence for empirical spectral measures: a soft
  approach}

\begin{document}

\maketitle

\begin{abstract}
 Understanding the limiting behavior of eigenvalues of random matrices
 is the central problem of random matrix theory.  Classical limit
 results are known for many models, and there has been significant
 recent progress in obtaining more quantitative, non-asymptotic
 results.  In this paper, we describe a systematic approach to bounding rates of
convergence and proving tail inequalities for the empirical spectral
measures of a wide variety of random matrix ensembles.  We illustrate
the approach by proving asymptotically almost sure rates of
convergence of the empirical spectral measure in the following
ensembles:  Wigner matrices,
Wishart matrices, Haar-distributed matrices from the compact classical
groups, powers of Haar matrices, randomized sums and random
compressions of Hermitian matrices, a random matrix model for the
Hamiltonians of quantum spin glasses, and finally the complex Ginibre
ensemble.  Many of the results appeared previously and are being
collected and described here as illustrations of the general method; however, some
details (particularly in the Wigner and Wishart cases) are new.

Our approach
makes use of techniques from probability in Banach spaces, in
particular concentration of measure and bounds for suprema of
stochastic processes, in combination with more classical tools from
matrix analysis, approximation theory, and Fourier analysis.  It is
highly flexible, as evidenced by the broad list of examples.   It is
moreover based largely on ``soft'' methods, and
involves little hard analysis. 
\end{abstract}

The most fundamental problem in random matrix theory is to understand
the limiting behavior of the empirical spectral distribution of large
random matrices, as the size tends to infinity.  The first result on
this topic is the famous Wigner semi-circle law, the first version of
which was proved by Wigner in 1955 \cite{Wigner1,Wigner2}.  A random
matrix is called a \emph{Wigner matrix} if it is Hermitian, with
independent entries on and above the diagonal.  Wigner showed that,
under some conditions on the distributions of the entries, the
limiting empirical spectral measure of a (normalized) Wigner matrix is
the semi-circular law $\rho_{sc}$.

Wigner's first version of the semi-circle law gave convergence in
expectation only; i.e., he showed that the expected number of
eigenvalues of a Wigner  matrix in an interval
converged to the value predicted by the semi-circle law, as the size
of the matrix tended to infinity.  His second paper improved this to
convergence ``weakly in probability''.  The analog for random unitary
matrices, namely that their spectral measures converge to the uniform
measure on the circle, seems intuitively obvious;
surprisingly, convergence in mean and weak convergence in probability
were not proved until nearly 40 years after Wigner's original work
\cite{DiSh}.

While these results are fundamental, the limitations of limit theorems
such as these are well known.  Just as the Berry--Esseen theorem and
Hoeffding-type inequalities provide real tools for applications where
the classical central limit theorem only justifies heuristics, it is
essential to improve the classical limit results of random matrix
theory to quantitative approximation results which have content for
large but finite random matrices.  See \cite{DaSz,Vershynin} for
extended discussions of this so-called ``non-asymptotic'' random
matrix theory and its applications.

In this paper, we describe a systematic approach to bounding rates of
convergence and proving tail inequalities for the empirical spectral
measures of a wide variety of random matrix ensembles.  This approach
makes use of techniques from probability in Banach spaces, in
particular concentration of measure and bounds for suprema of
stochastic processes, in combination with more classical tools from
matrix analysis, approximation theory, and Fourier analysis.  Our
approach is highly flexible, and can be used for a wide variety of
types of matrix ensembles, as we will demonstrate in the following
sections.  Moreover, it is based largely on ``soft'' methods, and
involves little hard analysis.  Our approach is
restricted to settings in which there is a concentration of measure
phenomenon; in this sense, it has rather different strengths than the methods used in, for example,
\cite{ErYa,GoTi,TaVu-concentration} and many other works referred to
in those papers.  Those approaches achieve sharper results without requiring a measure
concentration hypothesis, but they require many delicate estimates and
are mainly restricted to random matrices constructed from independent
random variables, whereas our methods have no independence requirements.

The following key observation, a consequence of the classical
Hoffman--Wielandt inequality (see \cite[Theorem VI.4.1]{Bhatia}),
underlies the approach.
\begin{lemma}[see {\cite[Lemma 2.3]{MM-concentration}}] 
  \label{T:Lipschitz} 
  For an $n\times n$ normal matrix $M$ over $\C$, let
  $\lambda_1,\ldots,\lambda_n$ denote the eigenvalues,
  and let $\mu_M$ denote the spectral measure of $M$; i.e.,
  \[
  \mu_M:=\frac{1}{n}\sum_{j=1}^n\delta_{\lambda_j}.
  \]  
  Then
  \begin{enumerate}[label=(\alph*)]
  \item \label{P:integral-is-Lipschitz} if $f:\C\to\R$ is 1-Lipschitz,
    then the map
    \[
    M\longmapsto\int f\ d\mu_M
    \]
    is $\frac{1}{\sqrt{n}}$-Lipschitz, with respect to the
    Hilbert--Schmidt distance on the set of normal matrices; and
  \item \label{P:distance-is-Lipschitz}if $\nu$ is any probability
    measure on $\C$ and $p\in[1,2]$, the map
    \[M\longmapsto W_p(\mu_M,\nu)\]
    is $\frac{1}{\sqrt{n}}$-Lipschitz.
  \end{enumerate}
\end{lemma}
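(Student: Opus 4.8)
The plan is to deduce both parts from the Hoffman--Wielandt inequality (see \cite[Theorem VI.4.1]{Bhatia}): if $M$ and $N$ are $n\times n$ normal matrices with eigenvalues $\lambda_1,\dots,\lambda_n$ and $\mu_1,\dots,\mu_n$ (each list taken with multiplicity, in any order), then there is a permutation $\sigma$ of $\{1,\dots,n\}$ with
\[
\sum_{j=1}^n \abs{\lambda_j - \mu_{\sigma(j)}}^2 \le \norm{M-N}_{HS}^2,
\]
where $\norm{\cdot}_{HS}$ denotes the Hilbert--Schmidt norm. A single such matching permutation is exactly what is needed to control both $\int f\,d\mu_M-\int f\,d\mu_N$ and the Wasserstein distance $W_p(\mu_M,\mu_N)$, which is how both parts will be handled.

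For part~(a), fix normal matrices $M,N$ and a permutation $\sigma$ as above. Since $f$ is $1$-Lipschitz, $\abs{f(\lambda_j)-f(\mu_{\sigma(j)})}\le\abs{\lambda_j-\mu_{\sigma(j)}}$ for each $j$, so
\[
\abs{\int f\,d\mu_M-\int f\,d\mu_N} = \abs{\frac1n\sum_{j=1}^n\bigl(f(\lambda_j)-f(\mu_{\sigma(j)})\bigr)} \le \frac1n\sum_{j=1}^n\abs{\lambda_j-\mu_{\sigma(j)}};
\]
an application of the Cauchy--Schwarz inequality then replaces the $\ell^1$ average by the (larger) $\ell^2$ average:
\[
\frac1n\sum_{j=1}^n\abs{\lambda_j-\mu_{\sigma(j)}} \le \frac1{\sqrt n}\Bigl(\sum_{j=1}^n\abs{\lambda_j-\mu_{\sigma(j)}}^2\Bigr)^{1/2} \le \frac1{\sqrt n}\norm{M-N}_{HS}.
\]

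For part~(b), the observation is that pairing each atom $\lambda_j$ of $\mu_M$ with the atom $\mu_{\sigma(j)}$ of $\mu_N$, each carrying mass $1/n$, is an admissible coupling of $\mu_M$ and $\mu_N$, because both measures are uniform on $n$ atoms; hence for every $p\ge1$,
\[
W_p(\mu_M,\mu_N)^p \le \frac1n\sum_{j=1}^n\abs{\lambda_j-\mu_{\sigma(j)}}^p.
\]
When $p\in[1,2]$, the power-mean inequality $\bigl(\frac1n\sum_j a_j^p\bigr)^{1/p}\le\bigl(\frac1n\sum_j a_j^2\bigr)^{1/2}$ combined with the previous display gives $W_p(\mu_M,\mu_N)\le\frac1{\sqrt n}\norm{M-N}_{HS}$, and the triangle inequality for $W_p$ (a genuine metric since $p\ge1$) yields $\abs{W_p(\mu_M,\nu)-W_p(\mu_N,\nu)}\le W_p(\mu_M,\mu_N)$, finishing the proof. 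There is no serious obstacle once Hoffman--Wielandt is invoked; the only points that deserve a word of care are that a single optimal-matching permutation can be used in both parts, that matching the atoms of two uniform $n$-point measures really is a coupling, and the role of the hypothesis $p\in[1,2]$, which is exactly what makes both the power-mean comparison and the triangle inequality for $W_p$ available.
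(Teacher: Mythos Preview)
Your proof is correct and follows exactly the route the paper indicates: the lemma is stated as a consequence of the Hoffman--Wielandt inequality (with full details deferred to \cite{MM-concentration}), and your argument---matching eigenvalues via the optimal permutation, then Cauchy--Schwarz/power-mean comparison, then the triangle inequality for $W_p$---is precisely how that consequence is drawn. One small remark: the triangle inequality for $W_p$ holds for all $p\ge 1$, so the upper bound $p\le 2$ is needed only for the power-mean step, not for both.
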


Here $W_p$ denotes the $L_p$-Kantorovich (or Wasserstein) distance on
probability measures on $\C$, defined by
\[
W_p(\mu, \nu) = \left(\inf_\pi \int \abs{x-y}^p \ d\pi(x,y)\right)^{1/p},
\]
where the infimum ranges over probability measures $\pi$ on $\C \times
\C$ with marginals $\mu$ and $\nu$.  The Kantorovich--Rubinstein
theorem (see \cite[Theorem 1.14]{Villani1}) gives that
\[
W_1(\mu, \nu) = \sup_{\abs{f}_L\le 1} \left(\int f\
  d\mu - \int f\ d\nu\right),
\]
where $\abs{f}_L$ denotes the Lipschitz constant of $f$; this
connects part \ref{P:integral-is-Lipschitz} of Lemma \ref{T:Lipschitz}
with estimates on $W_1$.

In many random matrix ensembles of interest there is a concentration
of measure phenomenon, meaning that well-behaved functions are
``essentially constant'', in the sense that they are close to their
means with high probability. A prototype is the following Gaussian
concentration phenomenon (see \cite{Ledoux-book}).

\begin{prop} \label{T:Gaussian-concentration}
  If $F: \R^n \to \R$ is a $1$-Lipschitz function and $Z$ is a
  standard Gaussian random vector in $\R^n$, then
  \[
  \Prob \left[ F(Z) - \E F(Z) \ge t \right] \le e^{-t^2/2}
  \]
  for all $t > 0$.
\end{prop}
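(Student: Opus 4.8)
The plan is to bound the exponential moment $H(\lambda):=\E\, e^{\lambda(F(Z)-\E F(Z))}$ for every $\lambda>0$ and then apply Markov's inequality and optimize in $\lambda$ (the Cram\'er--Chernoff scheme). By a routine approximation and truncation argument one may assume $F$ is smooth and bounded with $\abs{\nabla F}\le 1$ pointwise, which makes all the manipulations below legitimate (and which is harmless since, being $1$-Lipschitz against a Gaussian, $F$ already has finite exponential moments of all orders). The engine driving the bound on $H(\lambda)$ is the Gaussian logarithmic Sobolev inequality: for smooth $g\colon\R^n\to\R$,
\[
\Ent\!\bigl(g^2\bigr)\le 2\,\E\,\abs{\nabla g}^2,
\]
where all expectations are with respect to the law of $Z$ and $\Ent(h)=\E[h\log h]-(\E h)\log(\E h)$.

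Granting the log-Sobolev inequality, I would run the \emph{Herbst argument}. Apply the inequality to $g=e^{\lambda F/2}$, so that $\abs{\nabla g}^2=\tfrac{\lambda^2}{4}\abs{\nabla F}^2 e^{\lambda F}\le \tfrac{\lambda^2}{4}e^{\lambda F}$. Writing $K(\lambda):=\E\, e^{\lambda F}$, and using $K'(\lambda)=\E[F e^{\lambda F}]$, the log-Sobolev inequality becomes
\[
\lambda K'(\lambda)-K(\lambda)\log K(\lambda)=\Ent\!\bigl(e^{\lambda F}\bigr)\le\frac{\lambda^2}{2}\,K(\lambda),
\]
which rearranges to $\frac{d}{d\lambda}\bigl(\lambda^{-1}\log K(\lambda)\bigr)\le\tfrac12$. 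Since $\lambda^{-1}\log K(\lambda)\to\E F$ as $\lambda\downarrow 0$, integrating from $0$ to $\lambda$ gives $\log K(\lambda)\le\lambda\,\E F+\tfrac{\lambda^2}{2}$, i.e.\ $H(\lambda)\le e^{\lambda^2/2}$. Markov's inequality then yields $\Prob[F(Z)-\E F(Z)\ge t]\le e^{-\lambda t}H(\lambda)\le e^{-\lambda t+\lambda^2/2}$, and the choice $\lambda=t$ gives exactly $e^{-t^2/2}$.

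The one genuinely substantial step — and the main obstacle — is establishing the Gaussian log-Sobolev inequality itself; everything after it is a one-line computation. I would prove it by tensorization together with a limiting argument: entropy and Dirichlet energy tensorize appropriately over independent coordinates, so it suffices to treat $n=1$, and the one-dimensional Gaussian log-Sobolev inequality can be obtained as a central-limit-theorem limit of the elementary two-point log-Sobolev inequality on $\{-1,1\}$; alternatively one can prove it directly by differentiating along the Ornstein--Uhlenbeck semigroup, using the commutation relation $\nabla P_t f=e^{-t}P_t\nabla f$ and integration by parts. Both routes are standard but a little technical, which is precisely why the ``soft'' framework of this paper imports the concentration inequality as an off-the-shelf input (hence the citation of \cite{Ledoux-book}).

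If one is willing to lose the sharp constant, there is a cheaper self-contained route that avoids log-Sobolev entirely: letting $Z'$ be an independent copy of $Z$, Jensen's inequality gives $H(\lambda)\le\E\, e^{\lambda(F(Z)-F(Z'))}$; writing $Z_\theta=\sin\theta\,Z+\cos\theta\,Z'$ one checks that $(Z_\theta,\dot Z_\theta)$ is a pair of independent standard Gaussians for each $\theta$, and $F(Z)-F(Z')=\int_0^{\pi/2}\nabla F(Z_\theta)\cdot\dot Z_\theta\,d\theta$; applying Jensen in $\theta$ against the uniform probability measure on $[0,\pi/2]$ and then integrating out $\dot Z_\theta$ produces $H(\lambda)\le e^{\pi^2\lambda^2/8}$, which after optimization gives a bound of the shape $e^{-2t^2/\pi^2}$. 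I would nonetheless present the log-Sobolev proof, since the optimal constant is occasionally convenient downstream.
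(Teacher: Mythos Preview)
Your proof is correct. The paper does not give its own proof of this proposition; it is quoted as a known result with a reference to \cite{Ledoux-book}, and your Herbst-argument derivation from the Gaussian logarithmic Sobolev inequality (together with the sketch of how to obtain log-Sobolev itself) is exactly the approach presented in that reference, as you yourself note.
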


Suppose now that $M$ is a random matrix satisfying such a
concentration property.  
Lemma \ref{T:Lipschitz} means that one can obtain a bound on
$W_p(\mu_M, \nu)$ which holds with high probability if one can bound
$\E W_p(\mu_M, \nu)$.  That is, a bound on the \emph{expected}
distance to the limiting measure immediately implies an asymptotically
almost sure bound. The tail estimates coming from measure
concentration are typically exponential or better, and therefore
imply almost sure convergence rates via the Borel--Cantelli
lemma.  

We are thus left with the problem of bounding the expected distance from
the empirical spectral measure $\mu_M$ to some
deterministic reference measure $\nu$. There are two different
methods used for this step, depending on the properties of the
ensemble:
\begin{enumerate}
\item \textbf{Eigenvalue rigidity.} In some ensembles, each of the
  (ordered) individual eigenvalues can be assigned a predicted
  location based on the limiting spectral measure for the ensemble,
  such that all (or at least many) eigenvalues concentrate strongly
  near these predicted locations. In this case $\nu$ is taken to be a
  discrete measure supported on those predicted locations, and the
  concentration allows one to easily estimate $\E W_p(\mu_M, \nu)$.

\smallskip

\item \textbf{Entropy methods.} If instead we set $\nu = \E \mu_M$,
  then the Kantorovich--Rubinstein theorem implies that
  \[
  W_1(\mu_M, \nu) = \sup_{\abs{f}_L\le 1} \left(\int f\
    d\mu_M-\E\int f\ d\mu_M\right),
  \]
  so that $W_1(\mu_M,\nu)$ is the supremum of a centered stochastic
  process indexed by the unit ball of the space of Lipschitz functions
  on $\C$.  In ensembles with a concentration phenomenon for Lipschitz
  functions, part \ref{P:integral-is-Lipschitz} of Lemma
  \ref{T:Lipschitz} translates to an increment condition on this
  stochastic process, which gives a route to bounding its expected
  supremum via classical entropy methods. 
\end{enumerate}

Finally, it may still be necessary to estimate the distance from the
measure $\nu$ to the limiting spectral measure for the random matrix
ensemble. The techniques used to do this vary by the ensemble, but
this is a more classical problem of convergence of a sequence of
deterministic measures to a limit, and any of the many techniques for
obtaining rates of convergence may be useful.

\medskip

Applications of concentration of measure to random matrices date from
at least as long ago as the 1970s; a version of the argument for the
concentration of $W_1(\mu_M,\nu)$ essentially appears in the 2000
paper \cite{GuZe} of Guionnet and Zeitouni.  See
\cite{DaSz,Ledoux-GAFA,Tropp} for surveys of concentration methods in
random matrix theory.

The method of eigenvalue rigidity to bound Kantorovich distances is
particularly suited to situations in which the empirical spectrum is a
determinantal point process; this was first observed in the work of
Dallaporta \cite{Dallaporta1,Dallaporta2}.  The entropy approach to
random Kantorovich distances was introduced in the context of random
projections in \cite{EM-JOTP,EM-GAFA}; it was first applied for
empirical spectral measures in
\cite{MM-compressions,MM-concentration}.  A further abstraction was
given by Ledoux \cite{Ledoux-g2}.

\medskip

\subsection*{Organization} The rest of this paper is a series of
sections sketching some version of the program described above for a
number of random matrix ensembles. Sections \ref{S:Wigner} and section
\ref{S:Wishart} discusses Wigner and Wishart matrices, combining
eigenvalue rigidity arguments of Dallaporta
\cite{Dallaporta1,Dallaporta2} with measure concentration. Section
\ref{S:groups} discusses random matrices drawn uniformly from
classical compact matrix groups, and Section \ref{S:powers} discusses
powers of such matrices; both those sections follow \cite{MM-powers}
and also use the eigenvalue rigidity approach.  The next three
sections use the entropy method: Sections \ref{S:sums} and
\ref{S:compressions} discusses randomized sums and random compressions
of Hermitian matrices, following \cite{MM-concentration}, and Section
\ref{S:qsc} discusses Hamiltonians of quantum spin glasses, following
\cite{BuMe}.  Finally, Section \ref{S:Ginibre}, following
\cite{MM-Ginibre}, demonstrates in case of the complex Ginibre
ensemble, how eigenvalue rigidity alone allows one to carry our much of our
program even without the use of a general concentration phenomenon
together with Lemma \ref{T:Lipschitz}.

\section{Wigner matrices}\label{S:Wigner}

In this section we outline how our approach can be applied to the most
central model of random matrix theory, that of Wigner matrices.  We
begin with the most classical case: the Gaussian Unitary Ensemble
(GUE).  Let $M_n$ be a random $n\times n$ Hermitian matrix, whose
entries $\Set{[M_n]_{jk}}{1 \le j \le k \le n}$ are independent random
variables, such that each $[M_n]_{jj}$ has a $N(0,n^{-1})$
distribution, and each $[M_n]_{jk}$ for $j<k$ has independent real and
imaginary parts, each with a $N(0,(2n)^{-1})$ distribution.  Since
$M_n$ is Hermitian, it has real eigenvalues
$\lambda_1 \le \dots \le\lambda_n$.  Wigner's theorem implies that the
empirical spectral measure
\[
\mu_n = \frac{1}{n} \sum_{j=1}^n \delta_{\lambda_j}
\]
converges to the semicircle law $\rho_{sc}$. The following result
quantifies this convergence.

\begin{thm}\label{T:Wigner}
  Let $M_n$ be as above, and let $\mu_n$ denote its spectral
  measure. Then
  \begin{enumerate}[label=(\alph*)]
  \item
    \label{P:Wigner-expected-distance}
    $\displaystyle \E W_2(\mu_n,\rho_{sc})\le C \frac{\sqrt{\log(n)}}{n},$
  \item \label{P:Wigner-distance-tails}
    $\displaystyle \Prob\left[W_2(\mu_n,\rho_{sc})\ge C
      \frac{\sqrt{\log(n)}}{n}+t\right]\le e^{-n^2 t^2 / 2}$ for all
    $t \ge 0$, and
  \item \label{P:Wigner-as-convergence} with probability 1, for
    sufficiently large $n$, $\displaystyle W_2(\mu_n,\rho_{sc}) \le C'
    \frac{\sqrt{\log(n)}}{n}.$
  \end{enumerate}
\end{thm}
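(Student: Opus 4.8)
The plan is to obtain part \ref{P:Wigner-expected-distance} from an eigenvalue rigidity argument in the spirit of Dallaporta \cite{Dallaporta1,Dallaporta2}, and then to deduce parts \ref{P:Wigner-distance-tails} and \ref{P:Wigner-as-convergence} from it by the soft machinery described in the introduction; let me record the soft reductions first. Writing each diagonal entry of $M_n$ as $n^{-1/2}$ times a standard real Gaussian, and the real and imaginary parts of each entry above the diagonal as $(2n)^{-1/2}$ times a standard real Gaussian, realizes $M_n$ as a linear image of a standard Gaussian vector $Z$ in $\R^{n^2}$; the scalings are chosen precisely so that $\norm{M_n}_{HS}^2 = \frac1n\abs{Z}^2$, so this map is $\frac{1}{\sqrt n}$-Lipschitz from $\R^{n^2}$ with the Euclidean norm into the Hermitian matrices with the Hilbert--Schmidt norm. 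Composing with part \ref{P:distance-is-Lipschitz} of Lemma \ref{T:Lipschitz}, the map $Z \mapsto W_2(\mu_{M_n},\rho_{sc})$ is $\frac1n$-Lipschitz, so applying Proposition \ref{T:Gaussian-concentration} to $n$ times this function gives
\[
\Prob\left[W_2(\mu_n,\rho_{sc}) - \E W_2(\mu_n,\rho_{sc}) \ge t\right] \le e^{-n^2 t^2/2}
\]
for all $t \ge 0$. Combined with part \ref{P:Wigner-expected-distance}, this is part \ref{P:Wigner-distance-tails}; and taking $t = \frac{2\sqrt{\log n}}{n}$ there, the bound becomes $n^{-2}$, which is summable in $n$, so the Borel--Cantelli lemma gives part \ref{P:Wigner-as-convergence} with, say, $C' = C+2$.

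It remains to establish part \ref{P:Wigner-expected-distance}, which is where the actual work lies. Let $\gamma_1 < \cdots < \gamma_n$ be the quantiles of the semicircle law, defined by $\rho_{sc}((-\infty,\gamma_j]) = j/n$, and set $\nu_n = \frac1n\sum_{j=1}^n \delta_{\gamma_j}$. By the triangle inequality for $W_2$ together with Jensen's inequality,
\[
\E W_2(\mu_n,\rho_{sc}) \le \bigl(\E W_2(\mu_n,\nu_n)^2\bigr)^{1/2} + W_2(\nu_n,\rho_{sc}).
\]
Since $\mu_n$ and $\nu_n$ are both uniform over $n$ atoms listed in increasing order, the monotone coupling is admissible, whence $W_2(\mu_n,\nu_n)^2 \le \frac1n\sum_{j=1}^n (\lambda_j - \gamma_j)^2$ and so $\E W_2(\mu_n,\nu_n)^2 \le \frac1n\sum_{j=1}^n \E(\lambda_j-\gamma_j)^2$. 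The crucial input is a rigidity estimate for the GUE spectrum of the form
\[
\E(\lambda_j - \gamma_j)^2 \le C\,\frac{\log n}{n^2}\left(\frac{n}{\hat\jmath}\right)^{2/3}, \qquad \hat\jmath := \min(j,\, n+1-j),
\]
which can be extracted from the determinantal structure of the GUE spectrum --- concretely, from the fact that the number $N_n(x)$ of eigenvalues at most $x$ has variance $O(\log n)$ and concentrates around its mean, as exploited by Dallaporta. Since $\sum_{j=1}^n (n/\hat\jmath)^{2/3} = O(n)$, summing gives $\E W_2(\mu_n,\nu_n)^2 = O(\log n / n^2)$.

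The remaining term $W_2(\nu_n,\rho_{sc})$ is a purely deterministic estimate: transporting the mass $\frac1n$ that $\rho_{sc}$ places on each interval $[\gamma_{j-1},\gamma_j]$ to the point $\gamma_j$ shows $W_2(\nu_n,\rho_{sc})^2 \le \frac1n\sum_{j=1}^n (\gamma_j - \gamma_{j-1})^2$, and the standard asymptotics for the semicircle quantile spacings ($\gamma_j - \gamma_{j-1} \asymp \frac1n$ in the bulk and $\asymp n^{-2/3}\hat\jmath^{-1/3}$ near the edges) make this sum $O(1/n)$, so $W_2(\nu_n,\rho_{sc}) = O(1/n)$. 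Combining the two terms yields $\E W_2(\mu_n,\rho_{sc}) = O(\sqrt{\log n}/n)$, which is part \ref{P:Wigner-expected-distance}.

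The only ingredient here that is not entirely routine is the per-eigenvalue rigidity bound; the Lipschitz/concentration step, the monotone coupling, the quantile-spacing computation, and the Borel--Cantelli argument are all soft. For the GUE this rigidity estimate is classical, and the main care should go into tracking the dependence of the bound on the index $j$ (and its behavior near the spectral edges) sharply enough to recover the rate $\sqrt{\log n}/n$ rather than a larger power of $\log n$; presumably this is where the ``new details'' of this section are concentrated.
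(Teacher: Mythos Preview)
Your proposal is correct and follows essentially the same route as the paper: the soft reductions for parts \ref{P:Wigner-distance-tails} and \ref{P:Wigner-as-convergence} via Lemma~\ref{T:Lipschitz}\ref{P:distance-is-Lipschitz}, Gaussian concentration, and Borel--Cantelli match the paper exactly (you are just more explicit about the $\frac{1}{\sqrt n}$-Lipschitz realization of $M_n$), and your argument for part \ref{P:Wigner-expected-distance} via the quantile discretization $\nu_n$, the monotone coupling, and Dallaporta's determinantal rigidity estimates is the same as the paper's sketch. The only cosmetic difference is that you package the rigidity input as a single uniform-in-$j$ bound $\E(\lambda_j-\gamma_j)^2 \le C\frac{\log n}{n^2}(n/\hat\jmath)^{2/3}$, whereas the paper treats the bulk, soft edge, and hard edge separately (the latter via Ledoux--Rider tail estimates); both lead to the same $\sqrt{\log n}/n$ rate after summation.
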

Here and in what follows, symbols such as $c,C,C'$ denote constants
which are independent of dimension.

Part \ref{P:Wigner-expected-distance} of Theorem \ref{T:Wigner} was
proved by Dallaporta in \cite{Dallaporta1} using the eigenvalue
rigidity approach; the proof is 
outlined below.

Lemma \ref{T:Lipschitz} and the Gaussian concentration of measure
property (Proposition \ref{T:Gaussian-concentration}), imply that if
$F$ is a 1-Lipschitz function (with respect to the Hilbert--Schmidt
distance) on the space of Hermitian matrices, then
\begin{equation}
  \label{E:Wigner-concentration}
  \Prob\left[F(M_n)\ge \E F(M_n)+t\right] \le e^{-nt^2/2}
\end{equation}
for all $t \ge 0$. This fact, together with part
\ref{P:distance-is-Lipschitz} of Lemma \ref{T:Lipschitz} and part
\ref{P:Wigner-expected-distance} of Theorem \ref{T:Wigner} now imply
part \ref{P:Wigner-distance-tails}. Finally, part
\ref{P:Wigner-as-convergence} follows from part
\ref{P:Wigner-distance-tails} by the Borel--Cantelli lemma.  So it
remains only to prove part \ref{P:Wigner-expected-distance}.

\medskip

Define $\gamma_j \in \R$ such that
$\rho_{sc}((-\infty,\gamma_j])=\frac{j}{n}$; this is the predicted
location of the $j^{th}$ eigenvalue $\lambda_j$ of $M_n$. The
discretization $\nu_n$ of the semi-circle law $\rho_{sc}$ is given by
\[
\nu_n := \frac{1}{n}\sum_{j=1}^n\delta_{\gamma_j}.
\]
It can be shown that that $W_2(\rho_{sc},\nu_n) \le
\frac{C}{n}$. Furthermore, by the definition of $W_2$,
\[
\E W_2^2(\mu_n,\nu_n) 
\le \frac{1}{n} \sum_{j=1}^n \E\abs{\lambda_j-\gamma_j}^2.
\]
This reduces the proof of part \ref{P:Wigner-expected-distance} to
estimating the latter expectations.

It is a classical fact that the eigenvalues of the GUE form a
determinantal point process with kernel
\[
K_n(x,y) = \sum_{j=0}^n h_j(x) h_j(y) e^{-(x^2 + y^2)/2},
\]
where the $h_j$ are the orthonormalized Hermite polynomials
\cite[Section 6.2]{Mehta}. (The reader is referred to \cite{HKPV06}
for the definition of a determinantal point process.)  The following
is a then a special case of some important general properties of
determinantal point processes \cite[Theorem 7]{HKPV06},
\cite{Gustavsson}.

\begin{prop}
  \label{T:GUE-DPP}
  For each $x \in \R$, let $\mathcal{N}_x$ denote the number of
  eigenvalues of $M_n$ which are less than or equal to $x$. Then
  \[
  \mathcal{N}_x\overset{d}{=}\sum_{i=1}^n\xi_i,
  \]
  where the $\xi_i$ are independent $\{0,1\}$-valued Bernoulli random
  variables.

  Moreover,
  \[
  \E \mathcal{N}_x = \int_{-\infty}^x K_n(u,u) \ du
  \qquad \text{and} \qquad
  \var \mathcal{N}_x = \int_{-\infty}^x \int_x^\infty K_n(u, v)^2 \
  du \ dv.
  \]
\end{prop}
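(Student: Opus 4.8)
\medskip

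\noindent\textit{Proof proposal.} The plan is to read off both statements from the general theory of determinantal point processes already invoked in the statement (\cite[Theorem 7]{HKPV06}), combined with the single extra fact that $K_n$ is the kernel of an orthogonal projection on $L^2(\R)$. Since the Hermite functions $\phi_j(u):=h_j(u)e^{-u^2/2}$ form an orthonormal system in $L^2(\R)$, the integral operator with kernel $K_n(u,v)=\sum_j\phi_j(u)\phi_j(v)$ is the orthogonal projection onto their span; in particular it is self-adjoint, of finite rank, and satisfies the reproducing identity $\int_\R K_n(u,v)K_n(v,w)\,dv=K_n(u,w)$. Taking $w=u$ and using that $K_n$ is real and symmetric gives $\int_\R K_n(u,v)^2\,dv=K_n(u,u)$, which will be the key computational input.

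Next, set $A:=(-\infty,x]$, so that $\mathcal{N}_x$ is the number of points of the process in $A$. By \cite[Theorem 7]{HKPV06}, $\mathcal{N}_x$ has the distribution of $\sum_i\xi_i$, where the $\xi_i$ are independent Bernoulli random variables whose success probabilities are the eigenvalues of the compressed operator $T:=\mathbf{1}_A K_n\mathbf{1}_A$ on $L^2(\R)$. Since $T$ is self-adjoint, of finite rank (it factors through the finite-rank operator $K_n$), and satisfies $0\le T\le\mathbf{1}_A\le I$ (because $K_n$ is a projection), its eigenvalues lie in $[0,1]$ and only finitely many are nonzero, so $\sum_i\xi_i$ is a finite sum of independent Bernoullis; this is the first assertion.

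It remains to compute the first two moments from this representation. The mean is $\E\mathcal{N}_x=\sum_i\lambda_i=\tr T=\int_A K_n(u,u)\,du$, the trace formula for a finite-rank integral operator. By independence, $\var\mathcal{N}_x=\sum_i\lambda_i(1-\lambda_i)=\tr T-\tr T^2$; since $T^2$ has kernel $\mathbf{1}_A(u)\bigl(\int_A K_n(u,v)K_n(v,w)\,dv\bigr)\mathbf{1}_A(w)$, we get $\tr T^2=\int_A\int_A K_n(u,v)^2\,du\,dv$, while the reproducing identity rewrites $\tr T=\int_A K_n(u,u)\,du=\int_A\int_\R K_n(u,v)^2\,dv\,du$. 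Subtracting gives $\var\mathcal{N}_x=\int_A\int_{\R\setminus A}K_n(u,v)^2\,du\,dv=\int_{-\infty}^x\int_x^\infty K_n(u,v)^2\,du\,dv$, as claimed. Every step after the citation is a routine trace-class computation with a finite-rank, jointly continuous kernel, so I do not expect a genuine obstacle here; the only points deserving a line of justification are that the trace of such a kernel equals the integral of its diagonal, and the bookkeeping that turns $\tr T-\tr T^2$ into an integral over $A\times A^c$ via $K_n^2=K_n$.
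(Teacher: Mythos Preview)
Your proposal is correct and follows the same route the paper indicates: the paper does not actually prove this proposition but simply cites it as a special case of \cite[Theorem~7]{HKPV06} and \cite{Gustavsson}, and your argument spells out precisely how that citation yields the Bernoulli decomposition and the two moment formulas via the projection-kernel identity $\int K_n(u,v)^2\,dv=K_n(u,u)$. There is nothing to correct; your write-up is a faithful expansion of what the paper leaves to the references.
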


The first part of this result can be combined
with the classical Bernstein inequality to deduce that for each $t >
0$,
\begin{equation*}
  \Prob\left[\abs{\mathcal{N}_x - \E \mathcal{N}_x} > t \right]
  \le 2 \exp \left(-\frac{t^2}{2\sigma_x^2+t}\right),
\end{equation*}
where $\sigma_x^2 = \var \mathcal{N}_x$.  Using estimates on $\E
\mathcal{N}_x$ due to G\"otze and Tikhomirov \cite{GoTi05} and on
$\sigma_x^2$ due to Gustavsson \cite{Gustavsson} (both of which can be
deduced from the second part of Proposition \ref{T:GUE-DPP}), this
implies that for $x\in(-2+\delta,2-\delta)$,
\begin{equation*}
  \Prob\left[\abs{\mathcal{N}_x - n\rho_{sc}((-\infty,x])} > t + C\right]
  \le 2 \exp \left(-\frac{t^2}{2c_\delta\log(n)+t}\right)
\end{equation*}
for each $t \ge 0$. Combining this with the observation
that
\begin{equation*}
\Prob\left[\lambda_j>\gamma_j + t \right]
=\Prob\left[\mathcal{N}_{\gamma_j + t}<j\right],
\end{equation*}
one can deduce, upon integrating by parts, that
\[
\E \abs{\lambda_j - \gamma_j}^2 \le C_\eps \frac{\log(n)}{n^2}
\]
for $j \in [\eps n, (1-\eps) n]$.  This provides the necessary
estimates in the
bulk of the spectrum.  Dallaporta established
similar but weaker bounds for the soft edge of the spectrum using
essentially the last part of Proposition \ref{T:GUE-DPP}, and for the hard
edge using tail estimates due to Ledoux and Rider \cite{LeRi}.
This completes the proof of Theorem \ref{T:Wigner}.

\medskip

The real symmetric counterpart of the GUE is the Gaussian Orthogonal
Ensemble (GOE), whose entries $\Set{[M_n]_{jk}}{1 \le j \le k \le n}$
are independent real random variables, such that each $[M_n]_{jj}$ has
a $N(0,n^{-1})$ distribution, and each $[M_n]_{jk}$ for $j<k$ has a
$N(0,(\sqrt{2}n)^{-1})$ distribution. The spectrum of the GOE does not
form a determinantal point process, but a close distributional
relationship between the eigenvalue counting functions
of the GOE and GUE was found in \cite{FoRa,ORourke}. Using this,
Dallaporta showed that part \ref{P:Wigner-expected-distance} of
Theorem \ref{T:Wigner} also applies to the GOE. Part
\ref{P:Wigner-distance-tails} then follows from the Gaussian
concentration of measure property as before, and part
\ref{P:Wigner-as-convergence} from the Borel--Cantelli lemma.

To move beyond the Gaussian setting, Dallaporta invokes the Tao--Vu
four moment theorem \cite{TaVu1,TaVu2} and a localization theorem due
to Erd\H{o}s, Yau, and Yin \cite{ErYaYi} to extend Theorem
\ref{T:Wigner}\ref{P:Wigner-expected-distance} to random matrices with
somewhat more general entries.  The proofs of these results involve
the kind of hard analysis which it is our purpose to avoid in this
paper.  However, it is straightforward, under appropriate hypotheses,
to extend the measure concentration argument for part
\ref{P:Wigner-distance-tails} of Theorem \ref{T:Wigner}, and we
indicate briefly how this is done.

A probability measure $\mu$ on $\R$ is said to satisfy a quadratic
transportation cost inequality (QTCI) with constant $C > 0$ if
\[
W_2(\mu, \nu) \le \sqrt{C H(\nu \vert \mu)}
\]
for any probability measure $\nu$ which is absolutely continuous with
respect to $\mu$, where $H(\nu \vert \mu)$ denotes relative entropy.

\begin{prop}[{see \cite[Chapter 6]{Ledoux-book}}] 
  \label{T:tci} 
  Suppose that $X_1, \dots, X_n$ are independent random variables
  whose distributions each satisfy a QTCI with constant $C$.  If
  $F: \R^n \to \R$ is a $1$-Lipschitz function, then
  \[ 
  \Prob \left[ F(X) - \E F(X) \ge t \right] \le e^{-t^2/C} 
  \] 
  for all $t > 0$.
\end{prop}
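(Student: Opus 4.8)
The plan is to combine two standard ingredients. First, a tensorization argument shows that the product measure $\mu := \mu_1 \otimes \cdots \otimes \mu_n$, where $\mu_i$ denotes the law of $X_i$, itself satisfies a QTCI on $\R^n$ (with the Euclidean metric) with the \emph{same} constant $C$. Second, any probability measure satisfying a QTCI obeys a sub-Gaussian concentration inequality for $1$-Lipschitz functions; applying this to $F$ and optimizing a Chernoff bound yields the stated tail estimate.

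For the tensorization step I would follow Marton's argument. Given $\nu \ll \mu$, disintegrate it along the coordinates,
\[
\nu(dx) = \nu_1(dx_1)\,\nu_2(dx_2 \mid x_1)\cdots\nu_n(dx_n \mid x_1,\dots,x_{n-1}),
\]
and for each $i$ choose, measurably in $x_{<i}$, an optimal $W_2$-coupling of $\mu_i$ with the conditional law $\nu_i(\cdot \mid x_{<i})$. Sampling these couplings in sequence — at stage $i$ using the already-drawn $\nu$-coordinates as the conditioning — produces a single coupling $\pi$ of $\mu$ and $\nu$ (it is the independence of the coordinates of $\mu$ that makes the resulting $\mu$-marginal come out correctly) satisfying
\[
\int \norm{x-y}_2^2 \, d\pi(x,y) = \sum_{i=1}^n \E_{\nu}\!\left[ W_2\bigl(\mu_i, \nu_i(\cdot \mid x_{<i})\bigr)^2 \right] \le C \sum_{i=1}^n \E_{\nu}\!\left[ H\bigl(\nu_i(\cdot \mid x_{<i}) \bigm| \mu_i\bigr) \right] = C\, H(\nu \mid \mu),
\]
the last equality being the chain rule for relative entropy. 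Hence $W_2(\mu,\nu)^2 \le C\, H(\nu \mid \mu)$. (Since a QTCI implies the weaker $T_1$ transport inequality, each $\mu_i$ — and hence $\mu$ — has sub-Gaussian tails, so that $F(X)$ has a finite moment generating function and the tilted measures below are well defined; alternatively one truncates $F$ and passes to the limit.)

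For the second step I would bound the log-moment generating function directly. Assume without loss of generality that $\E F(X) = 0$ and set $\Lambda(\lambda) := \log \E e^{\lambda F(X)}$, so that $\Lambda(0) = \Lambda'(0) = 0$ and $\Lambda$ is convex and nonnegative. For $\lambda > 0$ let $\nu_\lambda$ be the probability measure on $\R^n$ with density $e^{\lambda F}/\E e^{\lambda F}$ with respect to $\mu$; then $\int F\,d\nu_\lambda = \Lambda'(\lambda)$ and $H(\nu_\lambda \mid \mu) = \lambda \Lambda'(\lambda) - \Lambda(\lambda)$. Since $F$ is $1$-Lipschitz,
\[
\Lambda'(\lambda) = \int F\,d\nu_\lambda - \int F\,d\mu \le W_1(\mu,\nu_\lambda) \le W_2(\mu,\nu_\lambda) \le \sqrt{C\, H(\nu_\lambda \mid \mu)},
\]
which rearranges to $\Lambda'(\lambda)^2 \le C\bigl(\lambda \Lambda'(\lambda) - \Lambda(\lambda)\bigr)$. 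Writing $G(\lambda) := \Lambda(\lambda)/\lambda$ (so that $G(0^+) = 0$ and $G, G' \ge 0$), this becomes $\bigl(G(\lambda) + \lambda G'(\lambda)\bigr)^2 \le C\lambda^2 G'(\lambda)$, whence $G(\lambda) \le \lambda\bigl(\sqrt{C\, G'(\lambda)} - G'(\lambda)\bigr) \le C\lambda/4$ by the elementary bound $\sqrt{Cs} - s \le C/4$ for $s \ge 0$. Thus $\Lambda(\lambda) \le C\lambda^2/4$ for all $\lambda \ge 0$, and a Chernoff bound optimized at $\lambda = 2t/C$ gives $\Prob[F(X) - \E F(X) \ge t] \le e^{-t^2/C}$.

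I expect the tensorization step to be the main technical point: it requires a measurable selection of optimal couplings along the coordinates, a verification that the assembled coupling has the correct marginals, and careful bookkeeping of the chain rule for relative entropy. By contrast, the passage from a QTCI on $\R^n$ to the concentration inequality is the short computation with $\Lambda$ displayed above, essentially a Herbst-type argument.
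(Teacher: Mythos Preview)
Your proof is correct and follows the standard route---Marton's tensorization of the QTCI together with the Bobkov--G\"otze/Herbst argument passing from $T_2$ to sub-Gaussian concentration---which is precisely the content of the reference the paper cites. The paper itself does not supply a proof of this proposition; it is quoted as a known result from \cite[Chapter 6]{Ledoux-book}, so there is no independent argument in the paper to compare against, and your writeup is essentially a sketch of what one finds there.
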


A QTCI is the most general possible hypothesis which implies
subgaussian tail decay, independent of $n$, for Lipschitz functions of
independent random variables; see \cite{Gozlan}.  It holds in
particular for any distribution satisfying a logarithmic Sobolev
inequality, including Gaussian distributions, or a distribution with a
density on a finite interval bounded above and below by positive
constants.  Using Dallaporta's arguments for part
\ref{P:Wigner-expected-distance} and substituting Proposition
\ref{T:tci} in place of the Gaussian concentration phenomenon, we
arrive at the following generalization of Theorem \ref{T:Wigner}.

\begin{thm} 
  \label{T:Wigner-tci}
  Let $M_n$ be a random Hermitian matrix whose entries satisfy each of
  the following:
  \begin{itemize}
  \item The random variables $\left\{\Re M_{jk}\right\}_{1 \le j \le k
      \le n}$ and $\left\{\Im M_{jk}\right\}_{1 \le j < k \le n}$ are
    all independent.
  \item The first four moments of each of these random variables is
    the same as for the GUE (respectively, GOE).
  \item Each of these random variables satisfies a QTCI with constant
    $c n^{-1/2}$.
  \end{itemize}
  Let $\mu_n$ denote the spectral measure of $M_n$. Then
  \begin{enumerate}[label=(\alph*)]
  \item $\displaystyle \E W_2(\mu_n,\rho_{sc})\le C \frac{\sqrt{\log(n)}}{n},$
  \item $\displaystyle \Prob\left[W_2(\mu_n,\rho_{sc})\ge C
      \frac{\sqrt{\log(n)}}{n}+t\right]\le e^{- c n^2 t^2}$ for all
    $t \ge 0$, and
  \item with probability 1, for sufficiently large $n$,
    $\displaystyle  W_2(\mu_n,\rho_{sc}) \le C' \frac{\sqrt{\log(n)}}{n}.$
  \end{enumerate}
\end{thm}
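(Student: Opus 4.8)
The plan is to reproduce, in this more general setting, the three-step scheme used for Theorem~\ref{T:Wigner}: establish the bound on $\E W_2(\mu_n,\rho_{sc})$ in part (a), promote it to the tail bound in part (b) via measure concentration, and obtain the almost sure statement in part (c) from Borel--Cantelli. The only ingredient that changes is the concentration mechanism: Proposition~\ref{T:tci} replaces the Gaussian concentration property \eqref{E:Wigner-concentration} used in the GUE/GOE case, everything else being formally identical.

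For part (a) I would invoke Dallaporta's bound for generalized Wigner matrices \cite{Dallaporta1,Dallaporta2}, obtained exactly as outlined above for the GUE and GOE: the GUE case rests on the determinantal structure of the spectrum together with the counting-function estimates of G\"otze--Tikhomirov and Gustavsson, the GOE case follows via the distributional comparison of counting functions in \cite{FoRa,ORourke}, and the passage to an arbitrary ensemble matching the first four moments of the GUE or GOE uses the Tao--Vu four moment theorem \cite{TaVu1,TaVu2} and the localization estimates of Erd\H{o}s, Yau, and Yin \cite{ErYaYi}; combined with $W_2(\rho_{sc},\nu_n)\le C/n$ this gives part (a). This is the ``hard analysis'' the paper deliberately does not reproduce, so at this step one only has to check that the hypotheses of those cited results are satisfied: matching four moments is assumed outright, and a QTCI on the entries implies subgaussian tails, which is far more regularity than any of those theorems require.

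For part (b), observe that $M_n$ is determined by the $n^2$ (respectively $\binom{n+1}{2}$) real random variables $\{\Re M_{jk}\}_{j\le k}$ and $\{\Im M_{jk}\}_{j<k}$, which are independent and each satisfy a QTCI with the assumed constant. By part~\ref{P:distance-is-Lipschitz} of Lemma~\ref{T:Lipschitz} the map $M\mapsto W_2(\mu_M,\rho_{sc})$ is $\tfrac{1}{\sqrt n}$-Lipschitz in the Hilbert--Schmidt distance, hence, after identifying the space of Hermitian matrices equipped with that distance with a Euclidean space (in which the off-diagonal coordinates carry an extra factor $\sqrt 2$, affecting only constants), a Lipschitz function of these independent variables. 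Proposition~\ref{T:tci} then yields a subgaussian concentration inequality for $W_2(\mu_n,\rho_{sc})$ about its mean, of the form appearing in part (b); combined with part (a) this is part (b). Part (c) follows from part (b) by the Borel--Cantelli lemma precisely as for Theorem~\ref{T:Wigner}: for a large enough constant $C$, taking $t$ a fixed multiple of $n^{-1}\sqrt{\log n}$ in part (b) makes the resulting probabilities summable in $n$.

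The substance of the argument, and thus its only real obstacle, lies entirely in part (a), where one is importing the analytic machinery of \cite{Dallaporta1,Dallaporta2,TaVu1,TaVu2,ErYaYi}; the remaining work --- recognizing $W_2(\mu_n,\rho_{sc})$ as a Lipschitz function of independent QTCI random variables and feeding this into Proposition~\ref{T:tci} and the Borel--Cantelli lemma --- is routine and constitutes the only genuinely new content relative to the GUE/GOE case.
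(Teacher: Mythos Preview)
Your proposal is correct and matches the paper's approach essentially line for line: part (a) is imported from Dallaporta's work (determinantal estimates for GUE, interlacing for GOE, and the Tao--Vu four moment theorem together with Erd\H{o}s--Yau--Yin localization for general entries), part (b) replaces the Gaussian concentration \eqref{E:Wigner-concentration} by Proposition~\ref{T:tci} applied to the $\tfrac{1}{\sqrt n}$-Lipschitz function $M\mapsto W_2(\mu_M,\rho_{sc})$ from Lemma~\ref{T:Lipschitz}, and part (c) is Borel--Cantelli. Your observation that the only new content is recognizing the applicability of Proposition~\ref{T:tci} is exactly the paper's point as well.
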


As mentioned above, a QTCI is a minimal assumption to reach exactly
this result by these methods.  A weaker and more classical assumption
would be a Poincar\'e inequality, which implies subexponential decay
for Lipschitz functions, and is the most general hypothesis implying
any decay independent of $n$; see \cite{GoRoSa} and the references
therein.  If the third condition in Theorem \ref{T:Wigner-tci} is
replaced by the assumption of a Poincar\'e inequality with constant
$cn^{-1/2}$, then the same kind of argument leads to an almost sure
convergence rate of order $\frac{\log(n)}{n}$; we omit the details.

\section{Wishart matrices}\label{S:Wishart}

In this section we apply the strategy described in the introduction to
Wishart matrices (i.e., random sample covariance matrices).  Let
$m \ge n$, and let $X$ be an $m\times n$ random matrix with i.i.d.\
entries, and define the Hermitian positive-semidefinite random matrix
\[
S_{m,n}:=\frac{1}{m}X^*X.
\]
We denote the eigenvalues of $S_{m,n}$ by
$0 \le \lambda_1 \le \dots \le \lambda_n$ and the empirical spectral
measure by
\[
\mu_{m,n} = \frac{1}{n} \sum_{j=1}^n \delta_{\lambda_j}.
\]

It was first proved in \cite{MaPa} that, under some moment conditions,
if $\frac{n}{m}\to\rho>0$ as $n,m\to\infty$, then $\mu_{m,n}$
converges to the Marchenko--Pastur law $\mu_\rho$ with parameter
$\rho$, with compactly supported density given by
\[
f_\rho(x)=\frac{1}{2\pi x}\sqrt{(b_\rho-x)(x-a_\rho)},
\]
on $\left(a_\rho,b_\rho\right)$, with $a_\rho=(1-\sqrt{\rho})^2$ and $b_\rho=(1+\sqrt{\rho})^2$.  The
following result quantifies this convergence for many distributions.

\begin{thm}
  \label{T:Wishart}
  Suppose that for each $n$, $0 < c \le \frac{n}{m} \le 1$, and that
  $X$ is an $m \times n$ random matrix whose entries satisfy each of
  the following:
  \begin{itemize}
  \item The random variables
    $\left\{\Re X_{jk}\right\}_{\substack{1 \le j \le m \\ 1 \le k \le
        n}}$
    and
    $\left\{\Im X_{jk}\right\}_{\substack{1 \le j \le m \\ 1 \le k \le
        n}}$ are all independent.
  \item The first four moments of each of these random variables are
    the same as for a standard complex (respectively, real) normal
    random variable.
  \item Each of these random variables satisfies a QTCI with constant
    $C$.
  \end{itemize}
  Let $\rho = \frac{n}{m}$ and let $\mu_{m,n}$ denote the spectral
  measure of $S_{m,n} = \frac{1}{m} X^* X$. Then
  \begin{enumerate}[label=(\alph*)]
  \item \label{P:Wishart-expected-distance}
    $\displaystyle \E W_2(\mu_{m,n},\mu_\rho)\le C \frac{\sqrt{\log(n)}}{n},$
  \item \label{P:Wishart-distance-tails}
    $\displaystyle \Prob\left[W_2(\mu_{m,n},\mu_\rho)\ge C
      \frac{\sqrt{\log(n)}}{n}+t\right]\le e^{- c m \min\{n t^2,
      \sqrt{n} t\}}$ for all $t \ge c \frac{\sqrt{\log(n)}}{n}$, and
  \item \label{P:Wishart-as-convergence} with probability 1, for sufficiently large $n$,
    $\displaystyle  W_2(\mu_{m,n},\mu_\rho) \le C' \frac{\sqrt{\log(n)}}{n}.$
  \end{enumerate}
\end{thm}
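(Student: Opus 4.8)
The plan is to follow the same three-step template used for Theorem \ref{T:Wigner-tci}, but now built on the Marchenko--Pastur law instead of the semicircle law. First I would fix the discretization: define $\gamma_j$ by $\mu_\rho((-\infty,\gamma_j]) = j/n$, set $\nu_{m,n} = \frac1n\sum_{j=1}^n \delta_{\gamma_j}$, and check (using the explicit density $f_\rho$, which is bounded below on compact subsets of its support and has a square-root vanishing at the edges) that $W_2(\mu_\rho,\nu_{m,n}) \le C/n$. By the triangle inequality and part \ref{P:distance-is-Lipschitz} of Lemma \ref{T:Lipschitz} it then suffices to bound $\E W_2(\mu_{m,n},\nu_{m,n})$, and by the definition of $W_2$ via the ``diagonal'' coupling of ordered eigenvalues,
\[
\E W_2^2(\mu_{m,n},\nu_{m,n}) \le \frac1n \sum_{j=1}^n \E\abs{\lambda_j - \gamma_j}^2.
\]
So the heart of the matter is eigenvalue rigidity for $S_{m,n}$: I need $\E\abs{\lambda_j-\gamma_j}^2 \le C_\eps \log(n)/n^2$ in the bulk $j\in[\eps n,(1-\eps)n]$, and weaker (but still $o(1/n)$ after averaging) bounds near the two edges.

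For the Gaussian (Laguerre ensemble) case, the eigenvalue counting function $\mathcal N_x$ of $S_{m,n}$ arises from a determinantal point process with the Laguerre kernel, so the analog of Proposition \ref{T:GUE-DPP} holds: $\mathcal N_x$ is a sum of independent Bernoulli variables, with mean and variance given by integrals of the kernel. Bernstein's inequality then yields $\Prob[\abs{\mathcal N_x - \E\mathcal N_x} > t] \le 2\exp(-t^2/(2\sigma_x^2 + t))$, and the needed inputs are (i) an estimate $\abs{\E\mathcal N_x - n\mu_\rho((-\infty,x])} \le C$ in the bulk, and (ii) $\sigma_x^2 \le c_\eps \log n$ in the bulk — both available in the literature on the Laguerre ensemble (the Wishart analogs of the Götze--Tikhomirov and Gustavsson estimates). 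Converting via $\Prob[\lambda_j > \gamma_j + t] = \Prob[\mathcal N_{\gamma_j+t} < j]$ and integrating by parts gives the bulk bound exactly as in the Wigner case; the soft edges are handled by the variance estimate from the second part of the DPP proposition, and the hard edge (the smallest eigenvalue, relevant since $\rho$ may be close to $1$) by known tail bounds for extreme Laguerre eigenvalues. This establishes part \ref{P:Wishart-expected-distance} in the Gaussian case; the four-moment matching hypothesis, together with a Tao--Vu/Erdős--Yau--Yin-type four moment and localization theorem for sample covariance matrices, transfers the expected-distance bound to general entries satisfying the moment conditions.

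Part \ref{P:Wishart-distance-tails} is where the Wishart case genuinely differs from Wigner, and I expect it to be the main obstacle. The map $X \mapsto W_2(\mu_{m,n},\mu_\rho)$ is a function of the $\approx 2mn$ independent real coordinates of $X$, each satisfying a QTCI with constant $C$, so Proposition \ref{T:tci} gives subgaussian concentration — but only if we control the Lipschitz constant of this map as a function of $X$ (not of $S_{m,n}$). Writing $S_{m,n} = \frac1m X^*X$, a perturbation $X \mapsto X + E$ changes $S_{m,n}$ by at most $\frac1m(\norm{X}_{op}\norm{E}_{HS} + \norm{E}_{op}\norm{X}_{HS} + \norm{E}_{HS}^2)$ in Hilbert--Schmidt norm, so by Lemma \ref{T:Lipschitz} the map has local Lipschitz constant of order $\frac{\norm{X}_{op}}{m\sqrt n}$. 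Since $\norm{X}_{op} \le C\sqrt m$ with overwhelming probability (this is a standard bound for random matrices with independent QTCI entries, e.g.\ via concentration of the operator norm around $\sqrt m + \sqrt n$), one gets an effective Lipschitz constant of order $\frac1{\sqrt{mn}} = \frac{1}{\sqrt n \cdot \sqrt{m}}$, which after the QTCI bound and accounting for the bad event $\{\norm{X}_{op} > C\sqrt m\}$ via a union bound produces a tail of the stated mixed form $\exp(-cm\min\{nt^2,\sqrt n\, t\})$ — the linear-in-$t$ regime being the signature of having truncated the unbounded Lipschitz constant. Making this rigorous requires the standard device of replacing the globally non-Lipschitz map by a genuinely Lipschitz modification that agrees with it on the high-probability event, which is routine but must be done carefully to get the exponents right. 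Finally, part \ref{P:Wishart-as-convergence} follows from part \ref{P:Wishart-distance-tails} by Borel--Cantelli, since $\sum_n \exp(-cm\min\{n(c\sqrt{\log n}/n)^2, \sqrt n \cdot c\sqrt{\log n}/n\}) = \sum_n \exp(-c'\log n) < \infty$ for a suitable choice of the constant in the threshold.
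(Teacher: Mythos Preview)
Your proposal is correct and follows essentially the same approach as the paper: part \ref{P:Wishart-expected-distance} via Dallaporta's eigenvalue rigidity for the Laguerre ensemble (determinantal structure, Bernstein, bulk/edge split) extended by the four-moment theorem, part \ref{P:Wishart-distance-tails} via the local Lipschitz bound $\norm{X^*X - Y^*Y}_{HS} \le (\norm{X}_{op}+\norm{Y}_{op})\norm{X-Y}_{HS}$ combined with a Lipschitz extension off the high-probability set $\{\norm{X}_{op}\le K\sqrt{m}\}$ and Proposition~\ref{T:tci}, and part \ref{P:Wishart-as-convergence} by Borel--Cantelli. The only point you gloss over that the paper mentions explicitly is that the real Gaussian case is reached from the complex one via interlacing (not the four-moment theorem), but this is a minor detail already outsourced to \cite{Dallaporta2}.
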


Strictly speaking, part \ref{P:Wishart-as-convergence} does not, as
stated, imply almost sure convergence of $\mu_{m,n}$, since $\rho$ and
hence $\mu_{\rho}$ itself depends on $n$. However, if $\rho = \rho(n)$ has a
limiting value $\rho^*$ as $n \to \infty$ (as in the original Marchenko--Pastur
result), then the measures $\mu_{\rho}$ converge to $\mu_{\rho^*}$.
This convergence can easily be quantified, but we will not pursue the
details here.

\begin{proof}
  Part \ref{P:Wishart-expected-distance} was proved by Dallaporta in
  \cite{Dallaporta2}, by the same methods as in Theorem
  \ref{T:Wigner-tci}\ref{P:Wigner-expected-distance} discussed in the
  last section.  First, when the entries of $X$ are complex normal
  random variables (in which $S_{m,n}$ is the unitary Laguerre
  ensemble), the eigenvalues of $S_{m,n}$ form a determinantal point
  process.  This implies an analogue of Proposition \ref{T:GUE-DPP},
  from which eigenvalue rigidity results can be deduced, leading to
  the estimate in part \ref{P:Wishart-expected-distance} in this
  case.  The result is extended to real Gaussian random matrices using
  interlacing results, and to more general distributions using
  versions of the four moment theorem for Wishart random matrices.
  The reader is referred to \cite{Dallaporta2} for the details.

  \medskip

  The proof of part \ref{P:Wishart-distance-tails} is more complicated
  than in the previous section, because the random matrix $S_{m,n}$
  depends quadratically on the independent entries of $X$.  However,
  we can still apply the machinery of measure concentration by using
  the fact that $S_{m,n}$ possesses local Lipschitz behavior, combined
  with a truncation argument. Indeed, if $X,Y$ are $m\times n$
  matrices over $\C$,
  \begin{equation}\begin{split}
      \label{E:not-quite-lipschitz}
      \norm{\frac{1}{m}X^*X-\frac{1}{m}Y^*Y}_{HS}
      & \le \frac{1}{m}\norm{X^*(X-Y)}_{HS} +
      \frac{1}{m}\norm{(X^*-Y^*)Y)}_{HS}\\
      & \le\frac{1}{m}\left(\norm{X}_{op} + \norm{Y}_{op}\right)\norm{X-Y}_{HS},
    \end{split}\end{equation}
  where we have used the facts that both the Hilbert--Schmidt norm
  $\norm{\cdot}_{HS}$ and the operator norm $\norm{\cdot}_{op}$ are
  invariant under conjugation and transposition, and that
  $\norm{AB}_{HS}\le\norm{A}_{op}\norm{B}_{HS}$.

  Thus, for a given $K > 0$, the function
  \[
  X \mapsto \frac{1}{m}X^*X
  \]
  is $\frac{2K}{\sqrt{m}}$-Lipschitz on
  $\Set{X\in\Mat{m,n}{\C}}{\norm{X}_{op}\le K\sqrt{m}},$ and so by
  Lemma \ref{T:Lipschitz}\ref{P:distance-is-Lipschitz}, the function
  \[
F:  X\mapsto W_2(\mu_{m,n},\mu_\rho)
  \]
  is $\frac{2K}{\sqrt{mn}}$-Lipschitz on this set.  We can therefore
  extend $F$ to a $\frac{2K}{\sqrt{mn}}$-Lipschitz function
  $\widetilde{F}:\Mat{m,n}{\C}\to\R$ (cf.\ \cite[Theorem
  3.1.2]{EvGa}); we may moreover assume that $\widetilde{F}(X) \ge 0$
  and
  \begin{equation}
    \label{E:bounded-extension}
    \sup_{X\in\Mat{m,n}{\C}} \widetilde{F}(X)
    =\sup_{\norm{X}_{op}\le K\sqrt{m}} W_2(\mu_{m,n}, \mu_\rho).
  \end{equation}
  Proposition \ref{T:tci} now allows us to control $\widetilde{F}(X)$
  and $\norm{X}_{op}$, which are both Lipschitz functions of $X$.

  First, an elementary discretization argument using Proposition
  \ref{T:tci} (cf.\ \cite[Theorem 5.39]{Vershynin}, or alternatively
  Lemma \ref{T:entropy} below) shows that
  \begin{equation}
    \label{E:op-norm-bound}
    \Prob\left[\norm{X}_{op} > K \sqrt{m} \right] \le 2 e^{-c m}
  \end{equation}
  for some $K, c > 0$. We will use this $K$ in the following.
  
  Next, Proposition \ref{T:tci} implies that
  \begin{equation}
    \label{E:F-tail}
    \Prob\left[\widetilde{F}(X)>t\right] \le C e^{-cmnt^2}
  \end{equation}
  as long as $t \ge 2 \E \widetilde{F}(X)$. Now
  \begin{equation}
    \label{E:E-F-tilde}
    \begin{split}
      \E\widetilde{F}(X) & = \E W_2(\mu_{m,n},\mu_\rho) +
      \E\left[\left(\widetilde{F}(X) -
          W_2(\mu_{m,n},\mu_\rho)\right) \ind{\norm{X}_{op}>K\sqrt{m}} \right] \\
      &\le C\frac{\sqrt{\log(n)}}{n}+\left(\sup_{\norm{X}_{op} \le
          K\sqrt{m}}W_2(\mu_{m,n}, \mu_\rho)\right) \Prob[\norm{X}_{op}>K\sqrt{m} ]
    \end{split}\end{equation}
  by part \ref{P:Wishart-expected-distance} and \eqref{E:bounded-extension}.
  Since $\mu_\rho$ is supported on $[a_\rho,b_\rho]$, and $\mu_{m,n}$
  is supported on
  $\left[0,\norm{\frac{1}{m}XX^*}_{op}\right] =
  \left[0,\frac{1}{m}\norm{X}_{op}^2\right]$,
  \[
  \sup_{\norm{X}_{op}\le K\sqrt{m}} W_2(\mu_{m,n},\mu_\rho)
  \le \max \{b_\rho, K^2\} \le C,
  \]
  and so by \eqref{E:op-norm-bound} and \eqref{E:E-F-tilde},
  \[
  \E\widetilde{F}(X)\le C\frac{\sqrt{\log(n)}}{n}+ C e^{-c m}
  \le C'\frac{\sqrt{\log(n)}}{n}.
  \]
  
  Finally, we have 
  \begin{equation}
    \label{E:W2-Wishart-bound-K}
    \begin{split}
      \Prob\left[W_2(\mu_{m,n},\mu_\rho)>t\right]&\le
      \Prob\left[W_2(\mu_{m,n},\mu_\rho)>t, \norm{X}_{op}\le
        K \sqrt{m}\right]+\Prob\left[\norm{X}_{op}> K\sqrt{m}\right]\\
      &\le\Prob\left[\widetilde{F}(X)>t\right]+\Prob\left[\norm{X}_{op}>
        K\sqrt{m}\right] \\
      & \le C' e^{-cmn t^2}
    \end{split}\end{equation}
  for $c_1 \frac{\sqrt{\log(n)}}{n} \le t \le \frac{c_2}{\sqrt{n}}$ by
  \eqref{E:op-norm-bound} and \eqref{E:F-tail}. We omit the details of
  the similar argument to obtain a subexponential bound for
  $t > \frac{c_2}{\sqrt{n}}$. This concludes the proof of part
  \ref{P:Wishart-distance-tails}.
  
  Part \ref{P:Wishart-as-convergence} follows as before using the
  Borel--Cantelli lemma.
\end{proof}

An alternative approach to quantifying the limiting behavior of the
spectrum of Wishart matrices is to consider the singular values
$0 \le \sigma_1 \le \dots \le \sigma_n$ of $\frac{1}{\sqrt{m}}X$; that
is, $\sigma_j = \sqrt{\lambda_j}$. Lemma \ref{T:Lipschitz} can 
be applied directly in that context, by using the fact that the eigenvalues of
the Hermitian matrix $\begin{bmatrix} 0 & X \\ X^* & 0 \end{bmatrix}$
are $\{\pm \sigma_j\}$. However, if one is ultimately interested in
the eigenvalues $\{ \lambda_j\}$, then translating the resulting
concentration estimates to eigenvalues ends up requiring the same kind
of analysis carried out above.

\section{Uniform random matrices from the compact classical groups}
\label{S:groups}

Each of the compact classical matrix groups $\Orthogonal{n}$,
$\SOrthogonal{n}$, $\Unitary{n}$, $\SUnitary{n}$, $\Symplectic{n}$
possesses a uniform (Haar) probability measure which is invariant under
translation by a fixed group element.  Each of these uniform measures
possesses a concentration of measure property making it amenable to the
program laid out in the introduction; moreover, the eigenvalues of a
random matrix from any of these groups is a determinantal point
process, meaning that the eigenvalue rigidity approach used in Section
\ref{S:Wigner} applies here as well.  The limiting empirical spectral
measure for all of these groups is the uniform probability measure on
the circle, as first shown in \cite{DiSh}.  This convergence is
quantified in the following result, proved in \cite{MM-powers}.

\begin{thm}\label{T:groups}
  Let $M_n$ be uniformly distributed in any of
  $\Orthogonal{n}$, $\SOrthogonal{n}$, $\Unitary{n}$, $\SUnitary{n}$,
  $\Symplectic{n}$, and let $\mu_n$ denote its spectral measure.  Let
  $\mu$ denote the uniform probability measure on the unit circle
  $\Circle\subseteq\C$.  Then
  \begin{enumerate}[label=(\alph*)]
  \item \label{P:groups-expected-distance}
    $\displaystyle \E W_2(\mu_n,\mu)\le C\frac{\sqrt{\log(n)}}{n},$
  \item \label{P:groups-distance-tails}
    $\displaystyle \Prob\left[W_2(\mu_n,\mu) \ge
      C\frac{\sqrt{\log(n)}}{n}+t\right]\le e^{-cn^2t^2},$ and
  \item \label{P:groups-as-convergence} with probability 1, for
    sufficiently large $n$,
    $\displaystyle  W_2(\mu_n,\mu) \le C\frac{\sqrt{\log(n)}}{n}.$
  \end{enumerate}
\end{thm}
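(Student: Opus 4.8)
The plan is to run the eigenvalue rigidity argument of Section~\ref{S:Wigner}, now exploiting the classical fact that the eigenangles of a Haar-distributed matrix from any of the five groups form a determinantal point process on the circle (on an arc, in the orthogonal and symplectic cases). As in Theorem~\ref{T:Wigner}, parts~\ref{P:groups-distance-tails} and~\ref{P:groups-as-convergence} will be soft consequences of part~\ref{P:groups-expected-distance} via concentration of measure and the Borel--Cantelli lemma, so the real work is the expected-distance bound.

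For part~\ref{P:groups-expected-distance}, I would write the eigenvalues as $e^{i\theta_1},\dots,e^{i\theta_n}$ and introduce the discretization $\nu_n=\frac1n\sum_{j=1}^n\delta_{\omega_j}$, where $\omega_j=e^{i\phi_j}$ and $\phi_1,\dots,\phi_n$ are $n$ equally spaced angles (the $n$-th roots of unity for $\Unitary n$ and $\SUnitary n$; a variant placed symmetrically about the real axis, with a bounded number of points pinned at $\pm1$, for $\Orthogonal n$, $\SOrthogonal n$, $\Symplectic n$). Every point of $\Circle$ lies within arc-distance $\pi/n$ of some $\omega_j$, so an explicit coupling gives $W_2(\mu,\nu_n)\le C/n$. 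Pairing $e^{i\theta_j}$ with $\omega_j$ in the definition of $W_2$ and bounding chordal distance by arc-distance,
\[
\E W_2^2(\mu_n,\nu_n)\le\frac1n\sum_{j=1}^n\E\bigl|e^{i\theta_j}-\omega_j\bigr|^2\le\frac1n\sum_{j=1}^n\E|\theta_j-\phi_j|^2,
\]
so by the triangle inequality for $W_2$ everything reduces to the rigidity estimate $\E|\theta_j-\phi_j|^2\le C\frac{\log n}{n^2}$, uniformly in $j$. To prove this, let $\mathcal N_\theta$ be the number of eigenangles in $[0,\theta]$; by the general theory of determinantal processes $\mathcal N_\theta$ is equal in distribution to a sum of independent Bernoulli variables, so Bernstein's inequality gives $\Prob[|\mathcal N_\theta-\E\mathcal N_\theta|>t]\le 2\exp(-t^2/(2\sigma_\theta^2+t))$ with $\sigma_\theta^2=\var\mathcal N_\theta$. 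Two classical facts about the relevant kernels then close the argument: $|\E\mathcal N_\theta-\tfrac{n\theta}{2\pi}|\le C$ uniformly in $\theta$ (an exact identity for $\Unitary n$), and $\sigma_\theta^2\le C\log n$ uniformly in $\theta$ — and, pleasantly, this variance bound holds across the whole circle with no separate edge regime, unlike in the Wigner and Wishart cases. Inserting these into $\Prob[\theta_j>\phi_j+s]=\Prob[\mathcal N_{\phi_j+s}<j]$ (and its mirror image) and integrating by parts gives the claimed bound on $\E|\theta_j-\phi_j|^2$; the $O(1)$ index shifts caused by the pinned eigenvalues at $\pm1$ do not affect it. This is carried out in detail in \cite{MM-powers}.

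For parts~\ref{P:groups-distance-tails} and~\ref{P:groups-as-convergence}: each of the five groups, with the Hilbert--Schmidt metric, satisfies a dimension-dependent concentration inequality $\Prob[G-\E G\ge s]\le e^{-cns^2}$ for every $1$-Lipschitz $G$ (from a log-Sobolev inequality with constant of order $1/n$ for $\SOrthogonal n$, $\SUnitary n$, $\Symplectic n$, with the usual modifications handling $\Orthogonal n$ and $\Unitary n$). Since Lemma~\ref{T:Lipschitz}\ref{P:distance-is-Lipschitz} makes $M\mapsto W_2(\mu_M,\mu)$ a $\frac1{\sqrt n}$-Lipschitz function of $M$ in this metric, applying the inequality to $\sqrt n\,W_2(\mu_n,\mu)$ yields $\Prob[W_2(\mu_n,\mu)\ge\E W_2(\mu_n,\mu)+t]\le e^{-cn^2t^2}$; combined with part~\ref{P:groups-expected-distance} this is part~\ref{P:groups-distance-tails}, and part~\ref{P:groups-as-convergence} follows by taking $t=C'\frac{\sqrt{\log n}}{n}$ with $C'$ large enough that $e^{-cn^2t^2}=n^{-cC'^2}$ is summable, then invoking Borel--Cantelli.

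The main obstacle is the one genuinely model-specific input: the asymptotics of $\E\mathcal N_\theta$ and, above all, the uniform-in-$\theta$ bound $\var\mathcal N_\theta=O(\log n)$ for the eigenangle counting function of each of the five ensembles. These follow from known asymptotics of the corresponding correlation kernels, but they are where hard analysis cannot quite be avoided, and they must be verified uniformly across all five families and with care near the hard points $\pm1$ in the orthogonal and symplectic cases. Everything else is soft: Lemma~\ref{T:Lipschitz}, the group concentration inequalities, and Borel--Cantelli.
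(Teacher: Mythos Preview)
Your proposal is correct and follows essentially the same route as the paper: eigenvalue rigidity via the determinantal structure and Bernstein's inequality (with the uniform $O(\log n)$ variance bound and no edge regime) for part~\ref{P:groups-expected-distance}, then Lemma~\ref{T:Lipschitz}\ref{P:distance-is-Lipschitz} combined with the group concentration inequality for part~\ref{P:groups-distance-tails}, and Borel--Cantelli for part~\ref{P:groups-as-convergence}. Your additional remarks about the symmetric discretization and the pinned eigenvalues at $\pm 1$ for the orthogonal and symplectic cases are apt refinements the paper suppresses in its sketch, deferring to \cite{MM-powers}.
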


We briefly sketch the proof below; for full details, see \cite{MM-powers}.

\medskip

Part \ref{P:groups-expected-distance} is proved using the eigenvalue
rigidity approach described in Section \ref{S:Wigner} for the GUE.  We
first order the eigenvalues of $M_n$ as
$\{e^{i \theta_j}\}_{1 \le j \le n}$ with
$0 \le \theta_1 \le \dots \le \theta_n < 2\pi$, and define the
discretization $\nu_n$ of $\mu$ by
\[
\nu_n := \frac{1}{n} \sum_{j=1}^n \delta_{e^{2\pi i j /n}}.
\]
It is easy to show that $W_2(\mu,\nu_n) \le \frac{C}{n}$, and by the
definition of $W_2$,
\[
\E W_2^2(\mu_n, \nu_n) \le \frac{1}{n} \sum_{j=1}^n \E \abs{e^{i
    \theta_j} - e^{2\pi i j/ n}}^2
\le \frac{1}{n} \sum_{j=1}^n \E \abs{\theta_j - \frac{2\pi j}{n}}^2,
\]
so that part \ref{P:groups-expected-distance} can be proved by
estimating the latter expectations.

For these estimates, as for the GUE, one can make use of the determinantal
structure of the eigenvalue processes of uniformly distributed random
matrices. For the case of the unitary group $\Unitary{n}$, the
eigenvalue angles $\{\theta_j\}$ form a determinantal point process on
$[0,2\pi)$ with kernel
\[
K_n := \frac{\sin \left(\frac{n(x-y)}{2}\right)}{\sin
  \left(\frac{(x-y)}{2}\right)};
\]
this was first proved by Dyson \cite{Dyson}.  The determinantal
structure provides  an analogue
of Proposition \ref{T:GUE-DPP}:

\begin{prop}
  \label{T:groups-DPP}
  For each $0 \le x < 2\pi$, let $\mathcal{N}_x$ denote the number of
  eigenvalues $e^{i \theta_j}$ of $M_n \in \Unitary{n}$ such that
  $\theta_j \le x$. Then
  \begin{equation}\label{E:groups-counting-bernoullis}
  \mathcal{N}_x\overset{d}{=}\sum_{i=1}^n\xi_i,
  \end{equation}
  where the $\xi_i$ are independent $\{0,1\}$-valued Bernoulli random
  variables.

  Moreover,
  \begin{equation}\label{E:groups-means-variances}
  \E \mathcal{N}_x = \int_{0}^x K_n(u,u) \ du
  \qquad \text{and} \qquad
  \var \mathcal{N}_x = \int_{0}^x \int_x^{2\pi} \infty K_n(u, v)^2 \
  du \ dv.
  \end{equation}
\end{prop}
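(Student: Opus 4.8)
The plan is to recognize Proposition~\ref{T:groups-DPP} as the exact analogue of Proposition~\ref{T:GUE-DPP}, obtained by applying the same general determinantal-process input---Theorem~7 of \cite{HKPV06}---to the sine-kernel process of Dyson \cite{Dyson} in place of the Hermite-kernel process of the GUE. The only substantive task is to identify the relevant integral operator and check the hypotheses of that theorem.

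First I would recall that, with respect to a suitably normalized arclength measure on $[0,2\pi)$, the kernel $K_n$ is (up to an irrelevant gauge) the reproducing kernel of the $n$-dimensional space $\mathcal{H}_n := \operatorname{span}\{e^{ik\theta} : 0 \le k \le n-1\} \subseteq L^2([0,2\pi))$; equivalently, the associated integral operator $\mathcal{K}$ is the orthogonal projection onto $\mathcal{H}_n$. (The displayed $K_n$ differs from $\sum_{k=0}^{n-1} e^{ik(x-y)}$ only by the unimodular factor $e^{i(n-1)(x-y)/2}$, a gauge transformation which changes neither the point process nor any of the quantities below, so I would note this and then work with whichever form is convenient.) Being a finite-rank orthogonal projection, $\mathcal{K}$ is self-adjoint, trace class, and has spectrum contained in $[0,1]$, so Theorem~7 of \cite{HKPV06} applies. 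For the interval $A = [0,x]$, writing $\mathcal{K}_A := P_A \mathcal{K} P_A$ for the compression of $\mathcal{K}$ to $L^2(A)$ and $\lambda_1^A, \dots, \lambda_n^A \in [0,1]$ for its eigenvalues (at most $n$ are nonzero, since $\operatorname{rank}\mathcal{K}_A \le \operatorname{rank}\mathcal{K} = n$), that theorem gives $\mathcal{N}_x \overset{d}{=} \sum_{i=1}^n \xi_i$ with the $\xi_i$ independent and $\xi_i \sim \mathrm{Bernoulli}(\lambda_i^A)$, which is \eqref{E:groups-counting-bernoullis}.

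For the moment formulas, $\E \mathcal{N}_x = \sum_i \lambda_i^A = \tr \mathcal{K}_A = \int_0^x K_n(u,u)\,du$, and $\var \mathcal{N}_x = \sum_i \lambda_i^A(1 - \lambda_i^A) = \tr \mathcal{K}_A - \tr \mathcal{K}_A^2$, where $\tr \mathcal{K}_A^2 = \int_0^x \int_0^x \abs{K_n(u,v)}^2 \,du\,dv$ by the Hermitian symmetry of the kernel. To reach the stated form I would invoke the projection identity $\int_0^{2\pi} \abs{K_n(u,v)}^2\,dv = K_n(u,u)$ (that is, $\mathcal{K}^2 = \mathcal{K}$), which rewrites $\tr \mathcal{K}_A$ as $\int_0^x \int_0^{2\pi} \abs{K_n(u,v)}^2\,dv\,du$ and hence gives
\[
\var \mathcal{N}_x
= \int_0^x \int_0^{2\pi} \abs{K_n(u,v)}^2\,dv\,du
- \int_0^x \int_0^x \abs{K_n(u,v)}^2\,dv\,du
= \int_0^x \int_x^{2\pi} \abs{K_n(u,v)}^2\,dv\,du ,
\]
which is \eqref{E:groups-means-variances} once one notes that, in the gauge where $K_n$ is the real Dirichlet-type kernel displayed above, $\abs{K_n(u,v)}^2 = K_n(u,v)^2$.

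I do not expect a genuine obstacle here: the argument is entirely ``soft'', and the only points requiring care are bookkeeping ones---pinning down the normalization and gauge of $K_n$ so that it is literally a projection kernel, and confirming that $\mathcal{K}_A$ has rank at most $n$ so that exactly $n$ Bernoulli variables (some possibly degenerate) suffice in \eqref{E:groups-counting-bernoullis}. An alternative to citing \cite{HKPV06} is to compute the probability generating function of $\mathcal{N}_x$ directly as the Fredholm determinant $\det\bigl(I + (z-1)\mathcal{K}_A\bigr) = \prod_i \bigl(1 + (z-1)\lambda_i^A\bigr)$, which is manifestly that of $\sum_i \xi_i$; but the quickest route is to quote the general theorem, exactly as was done for Proposition~\ref{T:GUE-DPP}. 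For the remaining compact groups one argues identically, using the corresponding determinantal kernels---again projection kernels onto explicit finite-dimensional function spaces---as in \cite{MM-powers}.
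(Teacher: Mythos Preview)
Your proposal is correct and follows exactly the route the paper takes: the paper does not prove Proposition~\ref{T:groups-DPP} independently but presents it as the direct analogue of Proposition~\ref{T:GUE-DPP}, obtained by invoking the general determinantal-point-process result \cite[Theorem~7]{HKPV06} (together with Dyson's identification of the kernel) in place of the Hermite-kernel case. Your write-up simply fills in the bookkeeping details of that citation, including the trace computations for the mean and variance and the projection identity used to rewrite the variance; this is entirely in keeping with the paper's treatment.
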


Appropriately modified versions of Proposition \ref{T:groups-DPP} hold for the other
groups as well, due to determinantal structures in those contexts identified by  Katz and Sarnak \cite{KaSa}.

Using \eqref{E:groups-means-variances}, one can estimate $\E \mathcal{N}_x$ and
$\var \mathcal{N}_x$, and then use \eqref{E:groups-counting-bernoullis} and
Bernstein's inequality to deduce
that
\begin{equation}\label{E:groups-Bernstein-with-mean-variance}
  \Prob\left[\abs{\mathcal{N}_x - \frac{nx}{2\pi}} > t + C\right]
  \le 2 \exp \left(-\frac{t^2}{2c\log(n)+t}\right)
\end{equation}
for all $t > 0$. Combining this with the observation
that
\begin{equation*}
\Prob\left[\theta_j>\frac{2\pi j}{n} + t \right]
=\Prob\left[\mathcal{N}_{\frac{2\pi j}{n} + t}<j\right],
\end{equation*}
one can deduce, upon integrating by parts, that
\[
\E \abs{\theta_j - \frac{2\pi j}{n}}^2 \le C \frac{\log(n)}{n^2}
\]
for each $j$, which completes the proof of part
\ref{P:groups-expected-distance}.  Observe that this is made slightly
simpler than the proof of Theorem
\ref{T:Wigner}\ref{P:Wigner-expected-distance} for the GUE by the fact
that all of the eigenvalues of a unitary matrix behave like ``bulk''
eigenvalues.

\medskip

Part \ref{P:groups-distance-tails} of Theorem \ref{T:groups} follows
from part \ref{P:groups-expected-distance} and the following
concentration of measure property of the uniform measure on the
compact classical groups. (There is an additional subtlety in dealing
with the two components of $\Orthogonal{n}$, which can be handled by
conditioning on $\det M_n$.)

\begin{prop}\label{T:groups-concentration}
  Let $G_n$ be one of $\SOrthogonal{n}$, $\Unitary{n}$,
  $\SUnitary{n}$, or $\Symplectic{n}$, and let $F:G_n\to\R$ be
  1-Lipschitz, with respect to either the Hilbert--Schmidt distance or
  the geodesic distance on $G_n$.  Let $M_n$ be a uniformly
  distributed random matrix in $G_n$.  Then
  \[
  \Prob\left[\abs{F(M_n)-\E F(M_n)}>t\right]\le e^{-cnt^2}
  \]
  for every $t > 0$.
\end{prop}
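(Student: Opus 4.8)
The plan is to establish a logarithmic Sobolev inequality with constant of order $n^{-1}$ for the normalized Haar measure on each of the groups $G_n$, and then to invoke the Herbst argument: a probability measure satisfying a log-Sobolev inequality with constant $\rho$ satisfies $\Prob[F - \E F \ge t] \le e^{-t^2/(2\rho)}$ for every $1$-Lipschitz $F$, so that applying the bound to $F$ and to $-F$ gives the asserted two-sided estimate with $c$ proportional to $1/\rho$. Two preliminary reductions are in order. First, since the Hilbert--Schmidt distance between two elements of $G_n$ never exceeds the geodesic distance (a chord is no longer than the subtended arc), a function that is $1$-Lipschitz for $\norm{\cdot}_{HS}$ is a fortiori $1$-Lipschitz for the geodesic distance, so it suffices to treat the geodesic metric. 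Second, we equip $G_n$ with the bi-invariant Riemannian metric induced by the inner product $\inprod{X}{Y} = \tr(X^*Y)$ on the Lie algebra, whose geodesic distance is the one in question and whose normalized Riemannian volume is precisely Haar measure.

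For the three semisimple groups $\SOrthogonal{n}$, $\SUnitary{n}$, and $\Symplectic{n}$, the log-Sobolev inequality follows from the Bakry--\'Emery criterion, which reduces matters to a lower bound on the Ricci curvature. For a compact Lie group with a bi-invariant metric one has $\mathrm{Ric}(X,X) = -\tfrac14 B(X,X)$, where $B$ is the Killing form; for each of these Lie algebras $B(X,Y) = c_n\,\tr(XY)$ with $c_n$ of order $n$ (e.g.\ $c_n = n-2$ for $\mathfrak{so}(n)$ and $c_n = 2n$ for $\mathfrak{su}(n)$), and since the relevant $X$ are skew-Hermitian, $-\tr(X^2) = \norm{X}_{HS}^2$. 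Hence $\mathrm{Ric} = \tfrac{c_n}{4}\,g \ge c\,n\,g$ for a universal $c > 0$ once $n$ exceeds a fixed constant, the finitely many remaining small cases being trivial. Bakry--\'Emery then yields a log-Sobolev inequality for Haar measure with constant $O(n^{-1})$, and the Herbst argument finishes these cases.

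The group $\Unitary{n}$ needs an additional step, because it is not semisimple: its Lie algebra contains the one-dimensional center $\R\, iI$, along which the Ricci curvature vanishes, so Bakry--\'Emery does not apply directly. Here I would use the presentation $\Unitary{n} \cong (\SUnitary{n} \times \Circle)/\Z_n$ given by $(V, e^{i\theta}) \mapsto e^{i\theta}V$, in which the circle factor carries the flat metric making this map a Riemannian covering. Because the deck group $\Z_n$ wraps the circle $n$ times, that circle has circumference only $2\pi/\sqrt{n}$, hence obeys a log-Sobolev inequality with constant $O(n^{-1})$; tensorizing with the inequality for $\SUnitary{n}$ from the previous paragraph produces a log-Sobolev inequality with constant $O(n^{-1})$ on $\SUnitary{n} \times \Circle$. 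Finally, a $1$-Lipschitz function on $\Unitary{n}$ pulls back along the covering map to a $1$-Lipschitz function with the same law under the respective normalized volumes, so the inequality, and with it the concentration bound, descends to $\Unitary{n}$ with the same order of constant.

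The main obstacle is exactly this degeneracy of $\Unitary{n}$: one must ensure that the flat central direction does not degrade the dependence on $n$, which is why the ``short'' normalization of the circle factor --- forced by the $\Z_n$ appearing in the covering --- is the crucial point. By comparison, the curvature computation for the semisimple groups is routine, provided one keeps track of the normalization of the metric carefully enough that the lower bound genuinely grows like $n$ rather than like a constant. (An alternative to the Bakry--\'Emery/Herbst route is the classical argument via the Gromov--L\'evy isoperimetric inequality, which also rests on the Ricci lower bound and gives the same conclusion; the separate treatment of $\Unitary{n}$ is needed there too.)
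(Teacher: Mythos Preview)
Your treatment of the semisimple groups $\SOrthogonal{n}$, $\SUnitary{n}$, $\Symplectic{n}$ is correct and is essentially the approach the paper cites (Gromov--Milman via a uniform Ricci lower bound, or equivalently Bakry--\'Emery followed by Herbst). The paper itself gives no proof here beyond pointing to \cite{GrMi} and \cite[Section 4.4]{AnGuZe}, so your sketch is appropriate.

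The gap is in the $\Unitary{n}$ step. If you choose the metric on the $\Circle$ factor so that $(V,e^{i\theta})\mapsto e^{i\theta}V$ is a Riemannian covering, the differential at $(I,1)$ sends $(X,\alpha\,\partial_\theta)$ to $X+i\alpha I$, and since $\norm{X+i\alpha I}_{HS}^2=\norm{X}_{HS}^2+n\alpha^2$, the circle must carry the metric $n\,d\theta^2$ and hence have circumference $2\pi\sqrt{n}$, not $2\pi/\sqrt{n}$. The log-Sobolev constant of that circle is then of order $n$, so tensorizing with $\SUnitary{n}$ gives a log-Sobolev constant of order $n$ on the cover, and passing to the quotient cannot improve this. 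Your sentence ``the deck group $\Z_n$ wraps the circle $n$ times, so that circle has circumference $2\pi/\sqrt{n}$'' conflates the circle in the cover with its orbit space; since $\Z_n$ acts diagonally on $\SUnitary{n}\times\Circle$, the quotient is not $\SUnitary{n}\times(\Circle/\Z_n)$, and there is no product on which to tensorize with a small circle.

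The $\Z_n$ does carry the needed information, but it has to be used differently. One clean route (close to what \cite{MM-powers} does): for fixed $\theta$ the map $V\mapsto F(e^{i\theta}V)$ is $1$-Lipschitz on $\SUnitary{n}$, so it concentrates around its mean $G(\theta):=\E_V F(e^{i\theta}V)$ with sub-Gaussian constant $O(1/n)$. Because $e^{2\pi i/n}I\in\SUnitary{n}$ and Haar measure is translation invariant, $G(\theta+2\pi/n)=G(\theta)$; together with $|G(\theta)-G(\theta')|\le\sqrt{n}\,|\theta-\theta'|$ this forces $\sup_\theta|G(\theta)-\E F|\le\pi/\sqrt{n}$. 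Combining the fiberwise concentration with this deterministic bound on $G$ yields $\Prob[|F-\E F|>t]\le e^{-cnt^2}$ after adjusting constants. This is where the ``short circle of circumference $2\pi/\sqrt{n}$'' genuinely appears: not as a factor of a Riemannian product, but as the range of the averaged function $G$.
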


For $\SOrthogonal{n}$, $\SUnitary{n}$, and $\Symplectic{n}$, this 
property goes back to the work of Gromov and Milman \cite{GrMi}; for
the precise version stated here see \cite[Section 4.4]{AnGuZe}.  For
$\Unitary{n}$ (which was not covered by the results of \cite{GrMi}
because its Ricci tensor is degenerate), the concentration in
Proposition \ref{T:groups-concentration} was proved in
\cite{MM-powers}.

Finally, part \ref{P:groups-as-convergence} follows from part
\ref{P:groups-distance-tails} via the Borel-Cantelli lemma, thus
completing the proof of Theorem \ref{T:groups}.

\section{Powers of uniform random matrices}
\label{S:powers}

The approach used with random matrices from the compact classical
groups in the previous section can be readily generalized to powers of
such matrices, as follows.

\begin{thm}
  \label{T:groups-powers}
  Let $M_n$ be uniformly distributed in any of
  $\Orthogonal{n}$, $\SOrthogonal{n}$, $\Unitary{n}$, $\SUnitary{n}$,
  $\Symplectic{n}$.  Let $m\ge 1$,  and let $\mu_{m,n}$ denote the
  spectral measure of $M_n^m$.  Let
  $\mu$ denote the uniform probability measure on the unit circle
  $\Circle\subseteq\C$.  There are universal constants $C,c$ such that
  \begin{enumerate}[label=(\alph*)]
  \item
    \label{P:powers-expected-distance}$\displaystyle \E W_2(\mu_{m,n},\mu)\le
    C\frac{\sqrt{m\left(\log\left(\frac{n}{m}\right)+1\right)}}{n},$
  \item
    \label{P:powers-distance-tails}$\displaystyle \Prob\left[W_2(\mu_{m,n},\mu)\ge
      C\frac{\sqrt{m\left(\log\left(\frac{n}{m}\right)+1\right)}}{n}+t\right]\le
    e^{-cn^2t^2},$ and
  \item \label{P:powers-as-convergence}with probability 1, for
    sufficiently large $n$,
    $\displaystyle  W_2(\mu_{m,n},\mu)\le
    C\frac{\sqrt{m\left(\log\left(\frac{n}{m}\right)+1\right)}}{n}.$
  \end{enumerate}
\end{thm}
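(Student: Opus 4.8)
The plan, following \cite{MM-powers}, is to reduce to Theorem~\ref{T:groups} by means of a theorem of Rains and then run the eigenvalue rigidity argument of Section~\ref{S:groups} block by block. Recall from \cite{Rains} that if $M_n$ is Haar-distributed in $\Unitary{n}$ and $n = qm + r$ with $0 \le r < m$, then the eigenvalues of $M_n^m$ have the same joint distribution as the union of the eigenvalues of $r$ independent Haar matrices in $\Unitary{q+1}$ and $m-r$ independent Haar matrices in $\Unitary{q}$; there are analogous decompositions for the other classical groups (see \cite{MM-powers}). Writing $M^{(1)},\dots,M^{(\ell)}$ for the independent blocks, with $\ell \le m$ and sizes $n_1,\dots,n_\ell$ satisfying $\sum_k n_k = n$ and $\max_k n_k \le \lceil n/m \rceil$, this gives
\[
\mu_{m,n} \overset{d}{=} \frac{1}{n}\sum_{k=1}^\ell n_k\,\mu_{M^{(k)}}.
\]

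For part~\ref{P:powers-expected-distance} I would argue exactly as for the GUE in Section~\ref{S:Wigner} and for Theorem~\ref{T:groups}\ref{P:groups-expected-distance}. Order the combined eigenangles of $M_n^m$ as $0 \le \theta_1 \le \dots \le \theta_n < 2\pi$, let $\mathcal{N}_x$ count those that are $\le x$, and let $\mathcal{N}_x^{(k)}$ be the analogous count for $M^{(k)}$, so that $\mathcal{N}_x = \sum_{k=1}^\ell \mathcal{N}_x^{(k)}$ with the summands independent. By Proposition~\ref{T:groups-DPP}, in the unitary case each $\mathcal{N}_x^{(k)}$ is a sum of $n_k$ independent Bernoulli variables with $\E\mathcal{N}_x^{(k)} = \tfrac{n_k x}{2\pi}$, while the variance estimate behind Theorem~\ref{T:groups}\ref{P:groups-expected-distance} gives $\var\mathcal{N}_x^{(k)} \le c(\log(n_k)+1)$ uniformly in $x$. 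Summing over $k$, $\mathcal{N}_x$ is a sum of $n$ independent Bernoullis with $\E\mathcal{N}_x = \tfrac{nx}{2\pi}$ and $\var\mathcal{N}_x \le c\,m\bigl(\log(n/m)+1\bigr)$, so Bernstein's inequality yields, in analogy with \eqref{E:groups-Bernstein-with-mean-variance},
\[
\Prob\Bigl[\,\bigl|\mathcal{N}_x - \tfrac{nx}{2\pi}\bigr| > t + C\Bigr] \le 2\exp\Bigl(-\frac{t^2}{2c\,m(\log(n/m)+1) + t}\Bigr).
\]
Combining this with $\Prob\bigl[\theta_j > \tfrac{2\pi j}{n} + t\bigr] = \Prob\bigl[\mathcal{N}_{2\pi j/n + t} < j\bigr]$ and integrating by parts gives $\E\bigl|\theta_j - \tfrac{2\pi j}{n}\bigr|^2 \le C\,\tfrac{m(\log(n/m)+1)}{n^2}$ for every $j$; comparing $\mu_{m,n}$ with the discretization $\nu_n = \tfrac{1}{n}\sum_j \delta_{e^{2\pi i j/n}}$, for which $W_2(\mu,\nu_n)\le C/n$ as in Section~\ref{S:groups}, then gives part~\ref{P:powers-expected-distance}. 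The other classical groups are handled the same way, using their Rains decompositions and the Katz--Sarnak determinantal structures of the blocks; a bounded number of deterministic eigenvalues at $\pm 1$ changes the counts by only $O(1)$.

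For part~\ref{P:powers-distance-tails} I would use concentration of measure, but applied on the product group $\Unitary{n_1} \times \dots \times \Unitary{n_\ell}$ rather than on $\Unitary{n}$ itself: the map $M \mapsto M^m$ is only $m$-Lipschitz, so applying Proposition~\ref{T:groups-concentration} directly on $\Unitary{n}$ to $M \mapsto W_2(\mu_{M^m},\mu)$ would lose powers of $m$ in the exponent. Instead, by the Rains decomposition the function $(M^{(1)},\dots,M^{(\ell)}) \mapsto W_2\bigl(\tfrac{1}{n}\sum_k n_k\,\mu_{M^{(k)}},\,\mu\bigr)$ has the same law as $W_2(\mu_{m,n},\mu)$; by the convexity of $W_2^2$ together with Lemma~\ref{T:Lipschitz}\ref{P:distance-is-Lipschitz} applied block by block, it is $\tfrac{1}{\sqrt n}$-Lipschitz with respect to the $\ell^2$-combination of the Hilbert--Schmidt metrics on the factors. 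Since concentration tensorizes, Proposition~\ref{T:groups-concentration} applies on this product group, and combined with part~\ref{P:powers-expected-distance} this yields part~\ref{P:powers-distance-tails}. (For $\Orthogonal{n}$ one first conditions on $\det M_n$, as in Section~\ref{S:groups}.) Part~\ref{P:powers-as-convergence} then follows from part~\ref{P:powers-distance-tails} by the Borel--Cantelli lemma, as in the previous sections.

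The main obstacle is the decomposition step. Rains's power-map theorem is clean for $\Unitary{n}$, but its counterparts for $\SOrthogonal{n}$, $\Symplectic{n}$, and $\SUnitary{n}$ are more delicate, and one must verify that in every case the blocks have size at most $\lceil n/m \rceil$ and are drawn from ensembles to which Proposition~\ref{T:groups-DPP} and Proposition~\ref{T:groups-concentration} (or their variants) apply, and that the blockwise variance and Lipschitz estimates combine to give the stated dependence on $m$ without spurious extra factors. Granting the decomposition, both the rigidity estimate for part~\ref{P:powers-expected-distance} and the product-space concentration for part~\ref{P:powers-distance-tails} amount to bookkeeping on top of the arguments of Section~\ref{S:groups}.
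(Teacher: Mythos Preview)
Your proposal is correct and follows essentially the same route as the paper: Rains's decomposition to write the eigenvalue counting function of $M_n^m$ as a sum of independent counting functions of smaller Haar matrices, then Bernstein's inequality with the aggregated mean and variance to obtain eigenvalue rigidity for part~\ref{P:powers-expected-distance}, and tensorization of the log-Sobolev/concentration property on the product of the smaller groups for part~\ref{P:powers-distance-tails}. Your block-by-block Lipschitz computation via joint convexity of $W_2^2$ is equivalent to (and slightly more explicit than) the paper's implicit use of Lemma~\ref{T:Lipschitz} on the block-diagonal matrix $\operatorname{diag}(M^{(1)},\dots,M^{(\ell)})$, which gives the same $n^{-1/2}$ Lipschitz constant directly.
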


In fact, the same proof works for $m>1$ as in the previous section,
because of the following result of Rains \cite{Ra03}.  The result is
stated in the unitary case for simplicity, but analogous results hold
in the other compact classical matrix groups.

\begin{prop}
  \label{T:Rains}
  Let $m\le n$ be fixed. 
  If $M_n$ is uniformly distributed in $\Unitary{n}$, the eigenvalues
  of $M_n^m$ are distributed as those of $m$ independent uniform
  unitary matrices of sizes
  $\left\lfloor \frac{n}{m} \right\rfloor:=\max\left\{k\in\n\mid
    k\le\frac{n}{m}\right\}$
  and
  $\left\lceil \frac{n}{m} \right\rceil:=\min\left\{k\in\n\mid
    k\ge\frac{n}{m}\right\}$,
  such that the sum of the sizes of the matrices is $n$.
\end{prop}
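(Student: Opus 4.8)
The plan is to prove the statement at the level of eigenvalue densities, exploiting the Weyl integration formula for $\Unitary{n}$. If $M_n$ is Haar-distributed in $\Unitary{n}$, its eigenangles $\theta_1,\dots,\theta_n$ have joint density proportional to $\prod_{j<k}\abs{e^{i\theta_j}-e^{i\theta_k}}^2$ on $[0,2\pi)^n$; equivalently, writing $z_j = e^{i\theta_j}$, the density is proportional to $\abs{\Delta(z_1,\dots,z_n)}^2$ where $\Delta$ is the Vandermonde determinant. Raising $M_n$ to the $m$-th power sends $z_j \mapsto z_j^m$, so the first step is to understand the pushforward of this determinantal ensemble under the $m$-th power map on the circle. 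The key combinatorial observation is that the map $z \mapsto z^m$ is $m$-to-one on $\Circle$, and the $m$ preimages of a point $w$ are $w^{1/m}\zeta^k$ for $\zeta = e^{2\pi i/m}$ and $k = 0,\dots,m-1$; thus the eigenvalues of $M_n^m$ are naturally partitioned into $m$ groups according to which $m$-th root of unity sector their preimages lie in.

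The heart of the argument is then a residue/symmetrization computation: one fixes the assignment of the $n$ indices to the $m$ sectors — say $n_0$ indices in sector $0$, \dots, $n_{m-1}$ in sector $m-1$ with $\sum_\ell n_\ell = n$ — and integrates out the within-sector angular positions, keeping track of how the squared Vandermonde factorizes. The cross terms $\abs{e^{i\theta_j}-e^{i\theta_k}}^2$ with $j,k$ in different sectors, after the substitution to the power variables, should average (over the discrete sector labels / via the orthogonality of characters $\sum_{k=0}^{m-1}\zeta^{ak} = m\,\ind{m \mid a}$) to a constant, decoupling the sectors; the within-sector terms reassemble into squared Vandermonde determinants in the power variables of sizes $n_0,\dots,n_{m-1}$. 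This shows the eigenvalues of $M_n^m$ are distributed as a mixture, over compositions $(n_0,\dots,n_{m-1})$ of $n$, of $m$ independent Haar unitary spectra of those sizes. The final step is to show the mixture is concentrated: a counting argument (or direct evaluation of the partition-function normalizations, which are products of factorials via the Mehta integral / Selberg-type formula for the circular ensemble) forces every $n_\ell$ to equal $\lfloor n/m\rfloor$ or $\lceil n/m\rceil$, with all such balanced compositions equally likely — and in fact, since the sizes are determined up to permutation, the unordered multiset of sizes is deterministic.

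The main obstacle is the bookkeeping in the residue computation: making rigorous the claim that the inter-sector Vandermonde factors average to a constant requires care with the combined continuous integration and discrete summation, and one must verify that the normalizing constants work out so that the balanced compositions carry all the mass. An alternative, possibly cleaner route is to invoke the representation-theoretic description: the distribution of eigenvalues of $M_n^m$ is determined by the traces $\E\,\tr(M_n^{mk})$ for $k \ge 1$ (moments of the spectral measure), and by the Diaconis--Shahshahani computation these moments of $M_n^m$ match those of a direct sum of independent Haar unitaries of the stated sizes; combined with the fact that a determinantal ensemble on the circle with polynomial correlation kernel is determined by its moments, this yields the result. Either way, once Proposition \ref{T:Rains} is in hand, Theorem \ref{T:groups-powers} follows by applying Theorem \ref{T:groups} to each of the $m$ blocks of size $\approx n/m$ and combining the resulting bounds, exactly as indicated in the text.
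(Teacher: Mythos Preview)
The paper does not prove this proposition; it is quoted from Rains \cite{Ra03} and used as a black box. So there is no ``paper's proof'' to compare against, and your task was really to reconstruct Rains's argument.

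Your first approach is on the right track---one does push forward the Weyl density under $z\mapsto z^m$, sum over the $m^n$ choices of preimage branch, and exploit the orthogonality relation $\sum_{r=0}^{m-1}\zeta^{ar}=m\,\ind{m\mid a}$. But the mechanism you describe is not quite right. When you expand $\abs{\Delta(z)}^2=\abs{\det(z_j^{k})_{0\le j,k\le n-1}}^2$ and sum over branches, the orthogonality relation forces the two permutations in the expansion to agree modulo $m$ on every index; grouping exponents by their residue class mod $m$ then factors the density as a sum, over assignments of the $n$ points to $m$ blocks, of products of smaller squared Vandermondes. The block sizes are \emph{forced} to be $\abs{\{k\in\{0,\dots,n-1\}:k\equiv r\pmod m\}}$, which are exactly $\lfloor n/m\rfloor$ and $\lceil n/m\rceil$. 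There is no mixture over general compositions of $n$ that must then be shown to concentrate on balanced ones via a normalization or Selberg-integral argument; the balanced sizes arise directly from the residue classes of the Vandermonde exponents. So your ``counting argument \dots forces every $n_\ell$ to equal $\lfloor n/m\rfloor$ or $\lceil n/m\rceil$'' step is addressing a problem that does not actually occur, and suggests you have not carried the calculation through.

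Your alternative moment route has a more serious gap: the single-trace moments $\E\tr(M_n^{mk})$ determine only the one-point function (equivalently, the expected empirical spectral measure), not the joint eigenvalue distribution. To recover the full law you would need all mixed moments $\E\bigl[\prod_i\tr(M_n^{mk_i})\overline{\tr(M_n^{m\ell_i})}\bigr]$, i.e.\ the Diaconis--Evans / Diaconis--Shahshahani computation of joint traces, together with an argument that these determine the distribution on $(\Circle)^n/S_n$. That is doable but is substantially more than what you wrote.
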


As a consequence, if $\mathcal{N}_x$ is the number of eigenvalues
of $M_n^m$ lying in the arc from 1 to $e^{ix}$, then 
\[
\mathcal{N}_x\overset{d}{=}\sum_{j=0}^{m-1}\mathcal{N}_x^j,
\]
where the $\mathcal{N}_\theta^j$ are the counting functions of $m$
independent random matrices, each uniformly distributed in
$\Unitary{\left\lfloor\frac{n}{m}\right\rfloor}$ or
$\Unitary{\left\lceil\frac{n}{m}\right\rceil}$.  In particular, by
Proposition \ref{T:groups-DPP} $\mathcal{N}_x$ is equal in
distribution to a sum of independent Bernoulli random variables, and
its mean and variance can be estimated using the available estimates
for the individual summands established in the previous section.  One
can thus again apply Bernstein's inequality to obtain eigenvalue
rigidity, leading to a bound on $\E W_2(\mu_{m,n},\mu)$.

Crucially, the concentration phenomenon on the compact classical groups
tensorizes in a dimension-free way: the product of uniform measure on
the $m$ smaller unitary groups above has the same concentration
property as any one of those groups. This is a consequence of the fact
that the uniform measures on the compact classical groups satisfy
logarithmic Sobolev inequalities; see \cite[Section 4.4]{AnGuZe} and
the Appendix of \cite{MM-powers}.  This allows for the full program
laid out in the introduction to be carried out in this case, yielding
Theorem \ref{T:groups-powers} above.

\section{Randomized sums}\label{S:sums}

In this section we show how our approach can be applied to randomized
sums of Hermitian matrices.  In this and the following two sections,
we no longer have a determinantal structure allowing us to use
eigenvalue rigidity.  Instead we will use entropy methods to bound the
expected distance between the empirical spectral measure and its mean.

Let $A_n$ and $B_n$ be fixed $n\times n$ Hermitian matrices, and let
$U_n \in \Unitary{n}$ be uniformly distributed.  Define
\[
M_n := U_n A_n U_n^* + B_n;
\]
the random matrix $M_n$ is the so-called randomized sum of $A_n$ and
$B_n$. This random matrix model has been studied at some length in
free probability theory; the limiting spectral measure was studied
first by Voiculescu \cite{Vo} and Speicher \cite{Sp}, who showed that
if $\{A_n\}$ and $\{B_n\}$ have limiting spectral distributions
$\mu_A$ and $\mu_B$ respectively, then the limiting spectral
distribution of $M_n$ is given by the free convolution
$\mu_A\boxplus\mu_B$. 

The following sharpening of this convergence is a special case of
Theorem 3.8 and Corollary 3.9 of \cite{MM-concentration}; we present
below a slightly simplified version of the argument from that paper.

\begin{thm}
  \label{T:sums} 
  In the setting above, let $\mu_n$ denote the empirical spectral
  measure of $M_n$, and let $\nu_n = \E \mu_n$. Then
  \begin{enumerate}[label=(\alph*)]
  \item \label{P:sums-expected-distance}
    \(\displaystyle  \E W_1(\mu_n, \nu_n) \le
    \frac{C\norm{A_n}_{op}^{2/3}(\norm{A_n}_{op}+\norm{B_n}_{op})^{1/3}}{n^{2/3}}, \)
  \item \label{P:sums-distance-concentration}
    \(\displaystyle  \Prob \left[ W_1(\mu_n, \nu_n) \ge \frac{C\norm{A_n}_{op}^{2/3}(\norm{A_n}_{op}+\norm{B_n}_{op})^{1/3}}{n^{2/3}} + t
    \right] \le e^{ - c n^2 t^2 / \norm{A_n}_{op}^2} \), and
  \item \label{P:sums-as-distance}with probability $1$, for sufficiently
    large $n$, 
    \[ 
    W_1(\mu_{n}, \nu_n) \le
    C'\norm{A_n}_{op}^{2/3}(\norm{A_n}_{op}+\norm{B_n}_{op})^{1/3}
    n^{-2/3}.  
    \]
\end{enumerate}
\end{thm}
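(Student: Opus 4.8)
The plan is to apply the entropy method from the introduction with reference measure $\nu_n=\E\mu_n$. Since $M_n=U_nA_nU_n^*+B_n$ is Hermitian, all its eigenvalues lie in $[-L,L]$ with $L:=\norm{A_n}_{op}+\norm{B_n}_{op}$, and $\nu_n$ is supported there as well, so by the Kantorovich--Rubinstein theorem
\[
W_1(\mu_n,\nu_n)=\sup_f X_f,\qquad X_f:=\int f\,d\mu_n-\E\int f\,d\mu_n,
\]
where the supremum is over $1$-Lipschitz $f\colon[-L,L]\to\R$ with $f(-L)=0$ (the normalization is harmless, since additive constants cancel in $X_f$). The first step is an increment estimate for the centered process $(X_f)$. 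Adding and subtracting $VA_nU^*$ shows $\norm{UA_nU^*-VA_nV^*}_{HS}\le 2\norm{A_n}_{op}\norm{U-V}_{HS}$, so $U\mapsto UA_nU^*+B_n$ is $2\norm{A_n}_{op}$-Lipschitz from $\Unitary n$ (with Hilbert--Schmidt distance) into the Hermitian, hence normal, matrices; composing with Lemma~\ref{T:Lipschitz}\ref{P:integral-is-Lipschitz} applied to the $\abs{f-g}_L$-Lipschitz function $f-g$ makes $U\mapsto\int(f-g)\,d\mu_{M_n}$ a $\frac{2\norm{A_n}_{op}\abs{f-g}_L}{\sqrt n}$-Lipschitz function, so by Proposition~\ref{T:groups-concentration}
\[
\Prob\bigl[\abs{X_f-X_g}>s\bigr]\le\exp\Bigl(-\tfrac{cn^2s^2}{\norm{A_n}_{op}^2\abs{f-g}_L^2}\Bigr),
\]
while trivially $\abs{X_f-X_g}\le 2\norm{f-g}_{\infty,[-L,L]}$ pointwise.

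With these two increment bounds in hand I would run a one-step truncation plus chaining argument (the general mechanism is packaged in Lemma~\ref{T:entropy} below). Fix $\delta>0$ and replace each admissible $f$ by its piecewise-linear interpolant $f_\delta$ on the mesh-$\delta$ grid in $[-L,L]$; then $f_\delta$ is again $1$-Lipschitz and $\norm{f-f_\delta}_\infty\le\delta$, so $\sup_f\abs{X_f-X_{f_\delta}}\le 2\delta$ deterministically, and it remains to bound $\E\sup_{f_\delta}X_{f_\delta}$. The interpolants form a family indexed by the $2L/\delta$ slopes in $[-1,1]$, with $\abs{f_\delta-g_\delta}_L$ equal to the largest slope difference; hence in the metric $d(f,g):=\frac{C\norm{A_n}_{op}}{n}\abs{f-g}_L$ (which controls the subgaussian increments above and has diameter of order $\norm{A_n}_{op}/n$ on this family) the covering numbers satisfy $\log N(\eps)\le\frac{2L}{\delta}\log\frac{C\norm{A_n}_{op}}{n\eps}$. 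Dudley's entropy bound then gives
\[
\E\sup_{f_\delta}X_{f_\delta}\le C\int_0^{C\norm{A_n}_{op}/n}\sqrt{\tfrac{L}{\delta}\log\tfrac{C\norm{A_n}_{op}}{n\eps}}\,d\eps\le\frac{C\norm{A_n}_{op}}{n}\sqrt{\tfrac{L}{\delta}},
\]
so $\E W_1(\mu_n,\nu_n)\le C\bigl(\delta+\frac{\norm{A_n}_{op}}{n}\sqrt{L/\delta}\bigr)$; choosing $\delta=\norm{A_n}_{op}^{2/3}L^{1/3}n^{-2/3}$ to balance the two terms proves part~\ref{P:sums-expected-distance}.

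For part~\ref{P:sums-distance-concentration}, Lemma~\ref{T:Lipschitz}\ref{P:distance-is-Lipschitz} (with the fixed measure $\nu_n$) gives that $M\mapsto W_1(\mu_M,\nu_n)$ is $\frac{1}{\sqrt n}$-Lipschitz in Hilbert--Schmidt distance, so composing with the $2\norm{A_n}_{op}$-Lipschitz map $U\mapsto UA_nU^*+B_n$ makes $U\mapsto W_1(\mu_{M_n},\nu_n)$ a $\frac{2\norm{A_n}_{op}}{\sqrt n}$-Lipschitz function on $\Unitary n$; Proposition~\ref{T:groups-concentration} then yields $\Prob[W_1(\mu_n,\nu_n)\ge\E W_1(\mu_n,\nu_n)+t]\le e^{-cn^2t^2/\norm{A_n}_{op}^2}$, which together with part~\ref{P:sums-expected-distance} is part~\ref{P:sums-distance-concentration}. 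Part~\ref{P:sums-as-distance} then follows from the Borel--Cantelli lemma: taking $t$ equal to the displayed deterministic bound, the resulting exponent is at least $cn^{2/3}$ (using $L\ge\norm{A_n}_{op}$), so these probabilities are summable.

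The main obstacle is the increment estimate combined with the choice of truncation scale $\delta$. The subtlety is that the process has very small increments in the scaled Lipschitz metric $d$ — its diameter on $\delta$-piecewise-linear functions is only of order $\norm{A_n}_{op}/n$ — but the covering numbers in $d$ grow like $(L/\delta)\log(1/\eps)$, so Dudley's integral is useless without first trading the high-frequency part of $f$ against the crude $\norm{\cdot}_\infty$ bound at a cost of $\delta$; it is the optimization of this trade-off that produces the rate $n^{-2/3}$ rather than $n^{-1/2}$ or $n^{-1}$.
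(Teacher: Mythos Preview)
Your proof is correct and follows essentially the same route as the paper: the subgaussian increment estimate via Lemma~\ref{T:Lipschitz} and Proposition~\ref{T:groups-concentration}, approximation of Lipschitz test functions by piecewise-affine ones on a mesh (your parameter $\delta$ corresponds to the paper's $R/m$), Dudley's entropy bound on the resulting finite-dimensional family, and optimization over the mesh size to obtain part~\ref{P:sums-expected-distance}; parts~\ref{P:sums-distance-concentration} and~\ref{P:sums-as-distance} are handled identically. The only cosmetic difference is that the paper packages the Dudley step as Lemma~\ref{T:entropy}, whereas you invoke Dudley directly.
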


In the most typical situations of interest, $\norm{A_n}_{op}$ and
$\norm{B_n}_{op}$ are bounded independently of $n$.  If $\{A_n\}$ and
$\{B_n\}$ have limiting spectral distributions $\mu_A$ and $\mu_B$
respectively, then the rate of convergence of the (deterministic)
measures $\nu_n$ to $\mu_A \boxplus \mu_B$ will depend strongly on the
sequences $\{A_n\}$ and $\{B_n\}$; we will not address that question
here.

The Lipschitz property which is a crucial ingredient of our approach
to prove Theorem \ref{T:sums} is provided by the following lemma.

\begin{lemma}
  \label{T:sums-Lipschitz} 
  For each
  $1$-Lipschitz function $f:\R \to \R$, the maps
  \[
  U_n \mapsto \int f \ d\mu_n
  \qquad \text{and} \qquad
  U_n \mapsto W_1(\mu_n,\nu_n)
  \]
  are $\frac{2\norm{A_n}_{op}}{\sqrt{n}}$-Lipschitz on $\Unitary{n}$.
\end{lemma}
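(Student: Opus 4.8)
The plan is to reduce both assertions to a single norm estimate on the map $U_n \mapsto U_n A_n U_n^*$ from $\Unitary{n}$, equipped with the Hilbert--Schmidt metric, into the space of Hermitian matrices with the same metric, and then feed that into Lemma \ref{T:Lipschitz}. First I would observe that the second map is an immediate consequence of the first: $\int f\,d\mu_n = \int f\,d\mu_{M_n}$, and part \ref{P:integral-is-Lipschitz} of Lemma \ref{T:Lipschitz} says $M \mapsto \int f\,d\mu_M$ is $\frac{1}{\sqrt n}$-Lipschitz in Hilbert--Schmidt distance on normal matrices (in particular on Hermitian matrices), while part \ref{P:distance-is-Lipschitz} says $M \mapsto W_1(\mu_M, \nu_n)$ is $\frac{1}{\sqrt n}$-Lipschitz. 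So if I can show that $U_n \mapsto M_n = U_n A_n U_n^* + B_n$ is $2\norm{A_n}_{op}$-Lipschitz from $(\Unitary{n}, d_{HS})$ into $(\text{Hermitian}, d_{HS})$, both claims follow by composition, the additive constant $B_n$ being irrelevant to the Lipschitz constant.

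The core step, then, is the estimate
\[
\norm{U A_n U^* - V A_n V^*}_{HS} \le 2\norm{A_n}_{op}\,\norm{U - V}_{HS}
\]
for $U, V \in \Unitary{n}$. I would prove this by the standard telescoping trick:
\[
U A_n U^* - V A_n V^* = (U - V) A_n U^* + V A_n (U^* - V^*),
\]
and then bound each term using $\norm{AB}_{HS} \le \norm{A}_{op}\norm{B}_{HS}$ together with the unitary invariance of both norms (so $\norm{U^*}_{op} = 1$, $\norm{A_n U^*}_{HS}$-type factors collapse, and $\norm{U^* - V^*}_{HS} = \norm{U - V}_{HS}$). This gives each term bounded by $\norm{A_n}_{op}\norm{U - V}_{HS}$, hence the factor $2$. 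This is exactly parallel to the computation in \eqref{E:not-quite-lipschitz}, but cleaner because here $\norm{U}_{op} = \norm{V}_{op} = 1$ rather than requiring a truncation.

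I do not anticipate a genuine obstacle here; the only point requiring a word of care is the role of the metric on $\Unitary{n}$. The lemma should be read with the Hilbert--Schmidt metric on the group (the restriction of the ambient matrix norm), and then the composition argument is immediate. If one instead wanted the geodesic metric, one would use that the geodesic distance dominates the Hilbert--Schmidt (chordal) distance, so the same Lipschitz bound holds a fortiori; I would mention this only in passing, since later applications (via Proposition \ref{T:groups-concentration}, which is stated for both metrics) are insensitive to the choice. So the write-up is essentially: state the telescoping identity, apply $\norm{AB}_{HS}\le\norm{A}_{op}\norm{B}_{HS}$ twice, invoke unitary invariance, then compose with Lemma \ref{T:Lipschitz}\ref{P:integral-is-Lipschitz} and \ref{P:distance-is-Lipschitz} respectively.
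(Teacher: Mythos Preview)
Your proposal is correct and follows essentially the same approach as the paper: establish the Hilbert--Schmidt inequality $\norm{UA_nU^* - VA_nV^*}_{HS} \le 2\norm{A_n}_{op}\norm{U-V}_{HS}$ via the telescoping identity, then compose with Lemma~\ref{T:Lipschitz}. The paper's proof is more terse, simply stating this inequality and citing \cite[Lemma 3.2]{MM-concentration} for the computation you spell out, then invoking Lemma~\ref{T:Lipschitz}.
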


\begin{proof}
  Let $A$ and $B$ be $n \times n$ Hermitian matrices, and let $U, V
  \in \Unitary{n}$. Then it is straightforward to show that
  \[
  \norm{\bigl(UAU^* + B\bigr) - \bigl(VAV^* + B\bigr)}_{HS} 
  \le 2 \norm{A}_{op} \norm{U-V}_{HS}
  \]
  (see \cite[Lemma 3.2]{MM-concentration}). The lemma now follows by
  Lemma \ref{T:Lipschitz}.
\end{proof}

Part \ref{P:sums-distance-concentration} of Theorem \ref{T:sums} now
follows from part \ref{P:sums-expected-distance} using Lemma
\ref{T:sums-Lipschitz} and the concentration of measure phenomenon for
$\Unitary{n}$ (Proposition \ref{T:groups-concentration}), and part
\ref{P:sums-as-distance} follows as usual by the Borel--Cantelli
lemma.  It remains to prove part \ref{P:sums-expected-distance}; as
mentioned above, this is done
using entropy techniques for bounding the supremum of a stochastic
process. 

The following lemma summarizes what is needed here. This fact
is well-known to experts, but we were not able to find an explicit
statement in the literature.  

\begin{lemma}
  \label{T:entropy}
  Suppose that $(V,\norm{\cdot})$ be a finite-dimensional normed space
  with unit ball $\ball(V)$, and that $\Set{X_v}{v\in V}$ is a family
  of centered random variables such that
  \[
  \Prob[\abs{X_u - X_v} \ge t] \le 2 e^{-t^2/K^2 \norm{u-v}^2}
  \]
  for every $t \ge 0$. Then
  \[
  \E \sup_{v \in \ball(V)} X_v \le C K \sqrt{\dim V}.
  \]
\end{lemma}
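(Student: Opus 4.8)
The plan is to recognize Lemma~\ref{T:entropy} as a standard consequence of the chaining method (Dudley's entropy bound) together with the volumetric estimate for the covering numbers of the unit ball of a finite-dimensional normed space. Write $d = \dim V$.

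First I would normalize. Replacing $X_v$ by $X_v/K$ keeps the family centered and turns the hypothesis into $\Prob[\abs{X_u - X_v}\ge t]\le 2e^{-t^2/\norm{u-v}^2}$, so it is enough to prove $\E\sup_{v\in\ball(V)}X_v\le C\sqrt{d}$ in this normalization. Then, since $0\in\ball(V)$ and $\E X_0 = 0$, I would pass to the anchored process $Y_v := X_v - X_0$, noting that $\E\sup_{v\in\ball(V)}X_v = \E\sup_{v\in\ball(V)}Y_v$; the $Y_v$ still have subgaussian increments with respect to the metric $d(u,v) = \norm{u-v}$, and $Y_0 = 0$.

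Next I would apply Dudley's entropy bound to $\{Y_v\}_{v\in\ball(V)}$, which gives
\[
\E\sup_{v\in\ball(V)}Y_v \le C\int_0^\infty\sqrt{\log N\bigl(\ball(V),\norm{\cdot},\eps\bigr)}\,d\eps,
\]
where $N(\ball(V),\norm{\cdot},\eps)$ is the $\eps$-covering number of $\ball(V)$. If a self-contained argument is preferred, one proves this inequality by the usual chaining: take nested $\eps$-nets at scales $2^{-k}$, telescope $Y_v$ along nearest-point projections, union-bound over the increments at each level, and integrate the resulting subgaussian tail. Into this I would feed the volume bound: a maximal $\eps$-separated subset of $\ball(V)$ yields disjoint translates of $\frac{\eps}{2}\ball(V)$ contained in $\frac{3}{2}\ball(V)$ when $\eps\le 1$, so comparing Lebesgue volumes gives $N(\ball(V),\norm{\cdot},\eps)\le (3/\eps)^d$ for $0<\eps\le 1$, while $N = 1$ for $\eps>1$ since $\ball(V)$ has radius $1$ about the origin. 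Hence the entropy integral is bounded by $C\sqrt{d}\int_0^1\sqrt{\log(3/\eps)}\,d\eps = C'\sqrt{d}$, the integral being an absolute constant; undoing the normalization multiplies this by $K$, giving the claimed bound $C'K\sqrt{\dim V}$.

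There is no genuine obstacle here — the statement is folklore — so the only ``work'' is assembling standard pieces in the right order. The two points needing a little care are: carrying out the reduction to $K=1$ before invoking the volumetric estimate, which is naturally phrased for the norm metric rather than for $K\norm{\cdot}$; and applying Dudley's inequality to the correct quantity. The anchoring $Y_0 = 0$ makes it bound $\E\sup_v Y_v$ directly, and a version of the inequality phrased in terms of $\E\sup_{u,v}\abs{Y_u - Y_v}$ works equally well, since $\sup_v Y_v = \sup_v\bigl(Y_v - Y_0\bigr)\le\sup_{u,v}\abs{Y_u - Y_v}$.
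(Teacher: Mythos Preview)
Your proof is correct and follows essentially the same route as the paper: normalize to $K=1$, apply Dudley's entropy bound, and feed in the standard volumetric estimate $N(\eps)\le(3/\eps)^{\dim V}$ for $\eps\le 1$. The only difference is that you are slightly more explicit about the anchoring step $Y_v = X_v - X_0$, which the paper leaves implicit.
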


\begin{proof}
  This can be proved via an elementary $\eps$-net argument, but a
  quicker proof can be given using Dudley's entropy bound (see
  \cite[p.\ 22]{Talagrand} for a statement, and \cite[p.\ 70]{Talagrand}
  and \cite{Dudley} for
  discussions of the history of this result and its name).

  By rescaling it suffices to assume that $K=1$. Let $N(\eps)$ denote
  the number of $\eps$-balls in $V$ needed to cover the unit ball
  $\ball(V)$. A standard volumetric argument (see e.g.\
  \cite[Lemma 5.2]{Vershynin}) shows that
  \( N(\eps) \le (3/\eps)^{\dim V} \)
  for each $0 < \eps < 1$; of course $N(\eps) = 1$ for $\eps \ge
  1$. Then Dudley's bound yields
  \[
  \E \sup_{v \in \ball(V)} X_v \le C \int_0^\infty \sqrt{\log
    (N(\eps))} \ d\eps \le C \sqrt{\dim V} \int_0^1
  \sqrt{\log(3/\eps)} \ d\eps \le C' \sqrt{\dim V}.
  \qedhere
  \]
\end{proof}

To apply this lemma in our setting, denote by
\[
\Lip_0 := \Set{f:\R \to \R}{\abs{f}_L < \infty \text{ and } f(0) =
  0},
\]
so that $\Lip_0$ is a Banach space with norm $\abs{\cdot}_L$.  For
each $f \in \Lip_0$, define the random variable
\begin{equation}
  \label{E:X_f}
X_f := \int f \ d\mu_n - \E \int f \ d\mu_n.
\end{equation}
By the Kantorovich--Rubinstein theorem,
\begin{equation}
  \label{E:W1-Xf}
W_1(\mu_n,\nu_n) = \sup\left\{ X_f : f \in \ball(\Lip_0) \right\}.
\end{equation}
Lemma \ref{T:sums-Lipschitz} and Proposition
\ref{T:groups-concentration} imply that
\begin{equation}
  \label{E:sums-increments}
  \Prob\left[\abs{X_f-X_g} \ge t\right] = \Prob\left[\abs{X_{f-g}} \ge
    t\right]
  \le 2 \exp \left[-\frac{cn^2 t^2}{\norm{A_n}_{op}^2 \abs{f-g}_L^2}\right].
\end{equation}
We would like to appeal to Lemma \ref{T:entropy}, but unfortunately,
$\Lip_0$ is infinite-dimensional.  We can get around this problem with
an additional approximation argument.

Observing that $\mu_n$ is supported on
$[-\norm{M_n}_{op},\norm{M_n}_{op}]$ and
$\norm{M_n}_{op} \le \norm{A_n}_{op} + \norm{B_n}_{op}$, we begin by
replacing $\Lip_0$ with
\[
\Lip_0([-R,R]) := \Set{f:[-R,R] \to \R}{\abs{f}_L < \infty \text{ and } f(0) =
  0},
\]
with $R = \norm{A_n}_{op} + \norm{B_n}_{op}$, for \eqref{E:X_f},
\eqref{E:W1-Xf}, and \eqref{E:sums-increments} above. Now for an
integer $m \ge 1$, let $\Lip_0^m([-R,R])$ be the $2m$-dimensional
space of piecewise affine functions $f \in \Lip_0([-R,R])$ such that
$f$ is affine on each interval
$\left[-R + \frac{(k-1)R}{m},-R+\frac{kR}{m}\right]$ for
$k = 1, \dots, 2m$. Given $f \in \Lip_0([-R,R])$, there is a unique
function $g \in \Lip_0^m([-R,R])$ such that
$g(\frac{jR}{m}) = f(\frac{jR}{m})$ for each integer $j \in [-m,m]$;
and this $g$ satisfies
\[
\abs{g}_L \le \abs{f}_L
\qquad \text{and} \qquad
\norm{f-g}_\infty \le \frac{\abs{f}_LR}{2m}.
\]
Thus by \eqref{E:W1-Xf},
\[
W_1(\mu_n,\nu_n) \le \frac{R}{2m} + \sup \Set{X_g}{g \in \ball(\Lip_0^m([-R,R]))}.
\]
Now by \eqref{E:sums-increments} and Lemma \ref{T:entropy},
\[
\E W_1(\mu_n,\nu_n) \le \frac{R}{2m} + C\frac{\norm{A_n}_{op}\sqrt{m}}{n}.
\]
Part \ref{P:sums-expected-distance} now follows by optimizing over
$m$.  This completes the proof of Theorem \ref{T:sums}.

\medskip

An additional conditioning argument allows one to consider the case
that $A_n$ and $B_n$ are themselves random matrices in Theorem
\ref{T:sums}, assuming a concentration of measure property for their
distributions. We refer to \cite{MM-concentration} for details.

It seems that the entropy method does not usually result in sharp
rates; for example, in \cite{MM-concentration}, we used the entropy
approach for Wigner and Haar-distributed matrices, and the results
were not as strong as those in Sections \ref{S:Wigner} and \ref{S:groups}.  On the other hand,
the entropy method is more widely applicable than the determinantal
point process methods which yielded the results of Sections
\ref{S:Wigner} and \ref{S:groups}.  In addition to the randomized sums
treated in this section, we show in Sections \ref{S:compressions} and
\ref{S:qsc} how the entropy method can be used for random compressions
and for the Hamiltonians of quantum spin glasses.  The paper
\cite{MM-concentration} also used the entropy approach to prove
convergence rates for the empirical spectral measures of the circular
orthogonal ensemble and the circular symplectic ensemble, which we
have omitted from this paper.

\section{Random compressions}\label{S:compressions}
Let $A_n$ be a fixed $n\times n$ Hermitian (respectively, real
symmetric) matrix, and let $U_n$ be uniformly distributed in
$\Unitary{n}$ (respectively, $\Orthogonal{n}$).  Let $P_k$ denote the
projection of $\C^n$ (respectively $\R^n$) onto the span of the first
$k$ standard basis vectors.  Finally, define a random matrix $M_n$ by
\begin{equation}\label{E:compression}
M:=P_k U_n A_n U_n^* P_k^*.
\end{equation}
Then $M_n$ is a compression of $A_n$ to a random $k$-dimensional
subspace. In the case that $\{A_n\}_{n\in\N}$ has a limiting spectral
distribution and $\frac{k}{n}\to\alpha$, the limiting spectral
distribution of $M_n$ can be determined using techniques of free
probability (see \cite{Sp}); the limit is given by a free-convolution
power related to the limiting spectral distribution of $A_n$ and the
value $\alpha$.

For this random matrix model, the program laid out in the introduction
produces the following (cf.\ Theorem 3.5 and Corollary 3.6 in
\cite{MM-concentration}).

\begin{thm}\label{T:compressions}
  In the setting above,  let $\mu_n$ denote the empirical spectral
  distribution of $M_n$, and let $\nu_n=\E \mu_n$.  Then 
  \begin{enumerate}[label=(\alph*)]
  \item \label{P:compressions-expected-distance}  \(\displaystyle 
    \E W_1(\mu_n, \nu_n) \le \frac{C\norm{A_n}_{op}}{((kn)^{1/3}},
    \)
  \item \label{P:compressions-distance-concentration}
    \(\displaystyle  \Prob \left[ W_1(\mu_n, \nu_n) \ge
      \frac{C\norm{A_n}_{op}}{(kn)^{1/3}} + t \right] \le e^{ - c kn
      t^2/\norm{A_n}_{op}^2} \), and
  \item \label{P:compressions-as-distance} with probability $1$, for
    sufficiently large $n$,
    \(\displaystyle 
    W_1(\mu_n, \nu_n) \le C' \norm{A_n}_{op} (kn)^{-1/3}.
    \)
\end{enumerate}
\end{thm}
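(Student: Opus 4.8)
The plan is to run the same three-step scheme used for Theorem~\ref{T:sums}: prove the expected-distance bound \ref{P:compressions-expected-distance} by the entropy method, deduce the concentration estimate \ref{P:compressions-distance-concentration} from it via a Lipschitz estimate together with the concentration of measure property of $\Unitary{n}$ (Proposition~\ref{T:groups-concentration}), and obtain \ref{P:compressions-as-distance} from \ref{P:compressions-distance-concentration} by the Borel--Cantelli lemma. The one structural novelty relative to Section~\ref{S:sums} is that $M_n$ is a $k\times k$ matrix, so $\mu_n$ has $k$ atoms and Lemma~\ref{T:Lipschitz} contributes a factor $k^{-1/2}$ rather than $n^{-1/2}$; combined with the $n^{-1/2}$ coming from concentration on $\Unitary{n}$, this is exactly what produces the $(kn)^{1/3}$ in the denominator.

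First I would record the Lipschitz estimate analogous to Lemma~\ref{T:sums-Lipschitz}: for every $1$-Lipschitz $f\colon\R\to\R$, the maps $U_n\mapsto\int f\,d\mu_n$ and $U_n\mapsto W_1(\mu_n,\nu_n)$ are $\frac{2\norm{A_n}_{op}}{\sqrt{k}}$-Lipschitz on $\Unitary{n}$. This follows from the elementary inequality
\[
\norm{P_k\bigl(UAU^*-VAV^*\bigr)P_k^*}_{HS}\le\norm{UAU^*-VAV^*}_{HS}\le 2\norm{A}_{op}\norm{U-V}_{HS},
\]
where the first step uses that compression by the coordinate projection does not increase the Hilbert--Schmidt norm, and the second is the $B=0$ case of \cite[Lemma~3.2]{MM-concentration} (equivalently, write $UAU^*-VAV^*=UA(U^*-V^*)+(U-V)AV^*$ and apply $\norm{XY}_{HS}\le\norm{X}_{op}\norm{Y}_{HS}$), composed with parts \ref{P:integral-is-Lipschitz} and \ref{P:distance-is-Lipschitz} of Lemma~\ref{T:Lipschitz} applied to the $k\times k$ Hermitian matrix $M_n$. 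Given this, \ref{P:compressions-distance-concentration} follows from \ref{P:compressions-expected-distance} exactly as in Section~\ref{S:sums}: concentration on $\Unitary{n}$ (with the orthogonal case handled by conditioning on $\det U_n$, as in the discussion of $\Orthogonal{n}$ in Section~\ref{S:groups}) gives $\Prob[W_1(\mu_n,\nu_n)\ge\E W_1(\mu_n,\nu_n)+t]\le e^{-ckn t^2/\norm{A_n}_{op}^2}$, and \ref{P:compressions-as-distance} then follows by Borel--Cantelli.

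For \ref{P:compressions-expected-distance} I would carry out the entropy argument of Section~\ref{S:sums}. Since $M_n$ is a compression of $U_n A_n U_n^*$, we have $\norm{M_n}_{op}\le\norm{A_n}_{op}$, so both $\mu_n$ and $\nu_n$ are supported on $[-R,R]$ with $R=\norm{A_n}_{op}$, and it suffices to index the process $X_f:=\int f\,d\mu_n-\E\int f\,d\mu_n$ by $f\in\ball(\Lip_0([-R,R]))$; by Kantorovich--Rubinstein, $W_1(\mu_n,\nu_n)=\sup\Set{X_f}{f\in\ball(\Lip_0([-R,R]))}$. The Lipschitz estimate above and Proposition~\ref{T:groups-concentration} give the subgaussian increment bound
\[
\Prob\bigl[\abs{X_f-X_g}\ge t\bigr]\le 2\exp\left(-\frac{c\,kn\,t^2}{\norm{A_n}_{op}^2\,\abs{f-g}_L^2}\right).
\]
Replacing $\Lip_0([-R,R])$ by its $2m$-dimensional subspace of functions affine on each interval of length $R/m$ changes the supremum by at most $\tfrac{\norm{A_n}_{op}}{2m}$, and Lemma~\ref{T:entropy}, applied with $K\asymp\norm{A_n}_{op}/\sqrt{kn}$ and $\dim=2m$, then yields
\[
\E W_1(\mu_n,\nu_n)\le\frac{\norm{A_n}_{op}}{2m}+C\,\frac{\norm{A_n}_{op}\sqrt{m}}{\sqrt{kn}}.
\]
Choosing $m\asymp(kn)^{1/3}$ balances the two terms and gives $\E W_1(\mu_n,\nu_n)\le C\norm{A_n}_{op}(kn)^{-1/3}$, as claimed.

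I do not expect a serious obstacle: the argument is a direct transcription of the randomized-sums proof, so the only things requiring care are the bookkeeping of the two dimension parameters — using $k$ (the size of $M_n$, hence the number of atoms of $\mu_n$) in Lemma~\ref{T:Lipschitz} and $n$ (the size of the unitary group) in Proposition~\ref{T:groups-concentration} — and the observation that the natural truncation radius is $\norm{A_n}_{op}$, since a compression cannot increase the operator norm, which removes the need for the truncation-at-$\norm{M_n}_{op}$ step present in other entropy-method arguments.
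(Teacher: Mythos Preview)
Your proposal is correct and matches the paper's approach exactly: the paper simply states that the proof is ``essentially identical'' to that of Theorem~\ref{T:sums}, with the $k$-dependence arising because $k$, not $n$, is the size of the matrix when Lemma~\ref{T:Lipschitz} is applied. You have spelled out precisely this, including the compression step $\norm{P_k(UAU^*-VAV^*)P_k^*}_{HS}\le\norm{UAU^*-VAV^*}_{HS}$, the resulting $\frac{2\norm{A_n}_{op}}{\sqrt{k}}$-Lipschitz constant, and the correct bookkeeping of the two dimension parameters in the entropy optimization.
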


The proof is essentially identical to the one in the previous section;
the $k$-dependence in the bounds is a consequence of the fact that
$k$, not $n$, is the size of the matrix when Lemma \ref{T:Lipschitz}
is applied. As with Theorem \ref{T:sums}, an additional conditioning
argument allows one to consider the case that $A_n$ is random, with
distribution satisfying a concentration of measure property.

\section{Hamiltonians of quantum spin glasses}
\label{S:qsc}

In this section we consider the following random matrix model for the
Hamiltonian of a quantum spin glass: let
$\{Z_{a,b,j}\}_{\substack{1\le a,b\le 3\\1\le j\le n}}$ be independent
standard Gaussian random variables, and define the $2^n\times 2^n$
random Hermitian matrix $H_n$ by
\begin{equation}
  \label{E:H_n}
  H_n := \frac{1}{\sqrt{9n}} \sum_{j=1}^n \sum_{a,b=1}^3 Z_{a,b,j}
  \sigma^{(a)}_j \sigma^{(b)}_{j+1},
\end{equation}
where for $1\le a\le 3$,
\[
\sigma^{(a)}_j := I_n^{\otimes(j-1)}\otimes\sigma^{(a)}\otimes
I_2^{\otimes(n-j)},
\]
with $I_2$ denoting the $2\times 2$ identity matrix, $\sigma^{(a)}$
denoting the $2\times 2$ matrices
\[
\sigma^{(1)}:=\begin{bmatrix}0&1\\1&0\end{bmatrix}\qquad
\sigma^{(2)}:=\begin{bmatrix}0&-i\\i&0\end{bmatrix}\qquad
\sigma^{(3)}:=\begin{bmatrix}1&0\\0&-1\end{bmatrix},
\]
and the labeling cyclic so that
$\sigma^{(b)}_{n+1}:=\sigma^{(b)}_{1}.$ The random matrix $H_n$ acts
on the space $(\C^2)^{\otimes n}$ of $n$ distinguishable qubits; the
specific structure of $H_n$ above corresponds to nearest neighbor
interaction on a circle of qubits.

If $\mu_n$ denotes the empirical spectral measure of $H_n$, then the
ensemble average $\nu_n = \E \mu_n$ is known in this context as the
density of states measure $\mu_n^{DOS}$. Recently, Keating, Linden and
Wells \cite{KLW} showed that $\mu_n^{DOS}$ converges weakly to
Gaussian, as $n\to\infty$; i.e., they showed that the empirical
spectral measure of $H_n$ converges to Gaussian in expectation.  The
paper \cite{KLW} gives a similar treatment for more general
collections of (still independent) coupling coefficients, and more
general coupling geometries than that of nearest-neighbor
interactions.  In more recent work, Erd\H{os} and Schr\"oder \cite{ES}
have considered still more general coupling geometries, and found a
sharp transition in the limiting behavior of the density of states
measure depending on the size of the maximum degree of the underlying
graph, relative to its number of edges.

The following result, essentially proved in \cite{BuMe}, quantifies
this convergence.

\begin{thm}\label{T:quantum}
  Let $\mu_n$ be the spectral measure of $H_n$ and let $\gamma$ denote
  the standard Gaussian distribution on $\R$.  Then
  \begin{enumerate}[label=(\alph*)]
  \item \label{P:quantum-expected-distance}\(\displaystyle \E W_1(\mu_n,\gamma)\le \frac{C}{n^{1/6}},\)
  \item
    \label{P:quantum-distance-concentration}\(\displaystyle
    \Prob\left[W_1(\mu_n,\gamma)\ge\frac{C}{n^{1/6}}+t\right]\le
    e^{-9nt^2/2},\) and
  \item \label{P:quantum-as-distance}with probability 1, for all
    sufficiently large $n$,
    \[
    W_1(\mu_n,\gamma)\le \frac{C'}{n^{1/6}}.
    \]
\end{enumerate}
\end{thm}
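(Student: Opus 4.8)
The plan is to follow the entropy‑method template of Section~\ref{S:sums}, since $H_n$ carries no determinantal structure. Parts~\ref{P:quantum-distance-concentration} and~\ref{P:quantum-as-distance} will follow from part~\ref{P:quantum-expected-distance} by concentration of measure, so the real content is part~\ref{P:quantum-expected-distance}; for that I would bound $\E W_1(\mu_n,\gamma)$ by the sum of a random piece (the fluctuation of $\mu_n$ about its mean) and a deterministic piece (the distance from the density of states measure $\mu_n^{DOS}$ to $\gamma$).

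For the concentration step, write $Z=(Z_{a,b,j})\in\R^{9n}$, so that $H_n=H_n(Z)$ depends linearly on $Z$. The point is that (for $n\ge 3$) the $9n$ matrices $\sigma^{(a)}_j\sigma^{(b)}_{j+1}$ are distinct Pauli strings --- tensor products of Pauli matrices on sites $j,j+1$ and identities elsewhere --- hence pairwise orthogonal with respect to the Hilbert--Schmidt inner product, each of squared norm $\tr\bigl((\sigma^{(a)}_j\sigma^{(b)}_{j+1})^2\bigr)=\tr(I_{2^n})=2^n$ (since $\sigma^{(a)}_j$ and $\sigma^{(b)}_{j+1}$ commute and Pauli matrices square to the identity). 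Thus $Z\mapsto H_n$ is $\sqrt{2^n/(9n)}$-Lipschitz from $\R^{9n}$ with its Euclidean metric to the Hilbert--Schmidt metric, and composing with Lemma~\ref{T:Lipschitz}\ref{P:distance-is-Lipschitz} (applied to $2^n\times 2^n$ matrices) shows that $Z\mapsto W_1(\mu_n,\gamma)$ is $\frac{1}{\sqrt{9n}}$-Lipschitz on $\R^{9n}$. Proposition~\ref{T:Gaussian-concentration} then gives
\[
\Prob\bigl[W_1(\mu_n,\gamma)\ge \E W_1(\mu_n,\gamma)+t\bigr]\le e^{-9nt^2/2},
\]
which together with part~\ref{P:quantum-expected-distance} is part~\ref{P:quantum-distance-concentration}; part~\ref{P:quantum-as-distance} then follows by the Borel--Cantelli lemma as in the earlier sections.

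To prove part~\ref{P:quantum-expected-distance}, put $\nu_n=\mu_n^{DOS}=\E\mu_n$, so $\E W_1(\mu_n,\gamma)\le\E W_1(\mu_n,\nu_n)+W_1(\nu_n,\gamma)$. The first term I would handle exactly as in Section~\ref{S:sums}: for $f\in\Lip_0$ put $X_f=\int f\,d\mu_n-\E\int f\,d\mu_n$, so that $W_1(\mu_n,\nu_n)=\sup\Set{X_f}{f\in\ball(\Lip_0)}$ by Kantorovich--Rubinstein, and the Lipschitz bound above (via Lemma~\ref{T:Lipschitz}\ref{P:integral-is-Lipschitz} and Proposition~\ref{T:Gaussian-concentration}) gives the subgaussian increments $\Prob[\abs{X_f-X_g}\ge t]\le 2\exp\bigl(-\frac{9nt^2}{2\abs{f-g}_L^2}\bigr)$. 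Replacing $\Lip_0$ by the $2m$-dimensional space of piecewise-affine functions on $[-R,R]$ costs $\frac{R}{2m}$ in discretization and at most $2\int_{\abs{x}>R}\abs{x}\,d\nu_n\le\frac{2}{R}$ in truncation (by Cauchy--Schwarz and $\int x^2\,d\nu_n=\E[\tfrac{1}{2^n}\tr(H_n^2)]=1$, which is again the orthogonality of the Pauli strings); Lemma~\ref{T:entropy} then gives
\[
\E W_1(\mu_n,\nu_n)\le\frac{2}{R}+\frac{R}{2m}+\frac{C\sqrt m}{\sqrt{9n}},
\]
and optimizing over $m$ and then $R$ yields $\E W_1(\mu_n,\nu_n)\le Cn^{-1/4}=o(n^{-1/6})$. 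So the random fluctuations are negligible, and the claimed rate comes entirely from the deterministic term $W_1(\nu_n,\gamma)$.

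The remaining, and essentially the only hard, step is the deterministic bound $W_1(\nu_n,\gamma)\le Cn^{-1/6}$; this is the content of \cite{BuMe}, which quantifies the convergence proved in \cite{KLW}, and I would cite it rather than reprove it. The natural route is the method of moments: expanding $\E[\tfrac{1}{2^n}\tr(H_n^{2\ell})]$ by Wick's formula, a pairing of the $2\ell$ factors contributes exactly $1$ to the normalized trace whenever the $\ell$ selected coupling bonds are pairwise non-adjacent (so that all the Pauli strings involved commute and cancel in pairs), while the remaining configurations form a fraction of order $\ell^{O(1)}/n$ of all configurations and each contributes a normalized trace in $[-1,1]$; this should give $\bigl|\E[\tfrac{1}{2^n}\tr(H_n^{2\ell})]-(2\ell-1)!!\bigr|\le C(2\ell-1)!!\,\ell^{O(1)}/n$, with all odd moments vanishing by the symmetry $Z\mapsto-Z$. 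Thus $\nu_n$ agrees with $\gamma$ in moments up to order $p$ for any $p$ a sufficiently small power of $n$, and has Gaussian-type tails out to scale $\sqrt p$; turning this into a $W_1$-bound --- truncating a $1$-Lipschitz test function at scale $\sqrt p$, approximating it there by a degree-$p$ polynomial (Jackson's theorem), pairing the polynomial against the matching moments, and controlling the tails of both measures beyond $\sqrt p$ --- produces an error of order $1/\sqrt p$, and the bookkeeping works out to $n^{-1/6}$. I expect this last stage to be the main obstacle: both the combinatorial accounting of which products of Pauli strings have nonzero normalized trace, and the delicate balance among the polynomial degree, the order of moments that can be reliably controlled, and the truncation scale, require genuine work --- and it is there that any improvement of the exponent would have to come from.
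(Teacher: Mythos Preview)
Your proposal is correct and follows essentially the same architecture as the paper: Gaussian concentration plus Lemma~\ref{T:Lipschitz} for parts~\ref{P:quantum-distance-concentration} and~\ref{P:quantum-as-distance}, and the splitting $\E W_1(\mu_n,\gamma)\le\E W_1(\mu_n,\nu_n)+W_1(\nu_n,\gamma)$ with the entropy method for the first piece and a citation to \cite{BuMe} for the second. Your derivation of the Lipschitz constant via orthogonality of Pauli strings is exactly the content of Lemma~\ref{T:quantum-lipschitz}. Two remarks. First, by optimizing the truncation radius $R$ you obtain $\E W_1(\mu_n,\nu_n)\le Cn^{-1/4}$, sharper than the $n^{-1/6}$ the paper records for this piece; the paper simply does not optimize here since $n^{-1/6}$ already matches the deterministic term. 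Second, your sketch of the deterministic bound $W_1(\nu_n,\gamma)\le Cn^{-1/6}$ via moment matching and Jackson-type polynomial approximation is \emph{not} the route taken in the paper (or in \cite{BuMe}): there the input is the pointwise characteristic-function estimate from \cite{KLW}, which is converted to a $W_1$ bound by Fourier-analytic smoothing rather than by moments. Since you propose to cite \cite{BuMe} anyway this is not a gap in your proof, but be aware that your moment heuristic is speculative and would need genuine work to deliver the stated rate.
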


Because the coefficients $Z_{a,b,j}$ in \eqref{E:H_n} are taken to be
i.i.d.\ Gaussian random variables, the Gaussian concentration of
measure phenomenon (Proposition \ref{T:Gaussian-concentration}) can be
combined with Lemma \ref{T:Lipschitz} to carry out a version of the
approach used in the cases of random sums and random compressions
(Sections \ref{S:sums} and \ref{S:compressions}). The following lemma
provides the necessary link between Lemma \ref{T:Lipschitz} and
Proposition \ref{T:Gaussian-concentration} for this random matrix
model.

\begin{lemma}\label{T:quantum-lipschitz}
Let $\mathbf{x}=\{x_{a,b,j}\}\in\R^{9n}$ (with, say, lexicographic
ordering), and assume that $n\ge 3$.  Define $H_n(\mathbf{x})$ by 
\[
H_n(\mathbf{x}) := \frac{1}{3\sqrt{n}} \sum_{a,b=1}^3
\sum_{j=1}^nx_{a,b,j} \sigma^{(a)}_j \sigma^{(b)}_{j+1}.
\]
Then the map $\mathbf{x}\mapsto H_n$ is
$\frac{2^{n/2}}{3\sqrt{n}}$-Lipschitz.
\end{lemma}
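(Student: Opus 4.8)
The plan is to use that $\mathbf{x}\mapsto H_n(\mathbf{x})$ is a \emph{linear} map from $(\R^{9n},\abs{\cdot}_2)$ into the space of $2^n\times 2^n$ Hermitian matrices equipped with the Hilbert--Schmidt norm (the norm in which Lemma~\ref{T:Lipschitz} is phrased, hence the relevant target here). For a linear map the Lipschitz constant is exactly the operator norm, so it suffices to prove that for every $\mathbf{x}\in\R^{9n}$,
\[
\norm{\sum_{a,b=1}^3\sum_{j=1}^n x_{a,b,j}\,\sigma^{(a)}_j\sigma^{(b)}_{j+1}}_{HS}^2 \;=\; 2^n\sum_{a,b,j}x_{a,b,j}^2 ;
\]
dividing by $3\sqrt{n}$ then yields the claimed (and, as one sees, sharp) constant $\frac{2^{n/2}}{3\sqrt{n}}$.

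To establish this identity I would show that the $9n$ matrices $M_{a,b,j}:=\sigma^{(a)}_j\sigma^{(b)}_{j+1}$ are pairwise orthogonal in the Hilbert--Schmidt inner product $\inprod{A}{B}=\tr(A^*B)$, each with $\norm{M_{a,b,j}}_{HS}^2=2^n$; the displayed identity is then just the Pythagorean theorem. Each $M_{a,b,j}$ is a tensor product over the $n$ qubit slots, equal to $I_2$ in every slot except slot $j$ (carrying $\sigma^{(a)}$) and slot $j+1\bmod n$ (carrying $\sigma^{(b)}$), and the Hilbert--Schmidt inner product of two tensor products factors as the product of the slotwise traces. Combining this with the elementary facts $\tr(I_2)=2$, $\tr(\sigma^{(a)})=0$, $\tr(\sigma^{(a)}\sigma^{(b)})=2\delta_{ab}$, and the Hermiticity of each $\sigma^{(a)}$ (so adjoints may be dropped), one finds: when $j=j'$, the product of slotwise traces equals $2^{n-2}\cdot 2\delta_{aa'}\cdot 2\delta_{bb'}=2^n\delta_{aa'}\delta_{bb'}$, which gives both orthogonality among the nine choices of $(a,b)$ at a fixed site and the normalization $\norm{M_{a,b,j}}_{HS}^2=2^n$; when $j\neq j'$, the nearest-neighbour pairs $\{j,j+1\}$ and $\{j',j'+1\}$ are \emph{distinct} edges of the cycle $\Z/n\Z$, so some slot lies in exactly one of them, and in that slot one factor is a Pauli matrix while the other is $I_2$, forcing the trace factor $\tr(\sigma^{(\cdot)})=0$ and hence $\inprod{M_{a,b,j}}{M_{a',b',j'}}=0$.

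Putting the pieces together, $\norm{\sum x_{a,b,j}M_{a,b,j}}_{HS}^2=\sum x_{a,b,j}^2\,\norm{M_{a,b,j}}_{HS}^2=2^n\abs{\mathbf{x}}_2^2$, and by linearity $\norm{H_n(\mathbf{x})-H_n(\mathbf{y})}_{HS}=\frac{2^{n/2}}{3\sqrt{n}}\abs{\mathbf{x}-\mathbf{y}}_2$, slightly stronger than asserted. There is no genuine obstacle, but the one point requiring care is the combinatorial claim that for $n\ge 3$ two distinct edges of $\Z/n\Z$ have distinct vertex sets: this is precisely what makes the ``one slot carrying a bare Pauli matrix'' argument work, and it fails for $n=2$ (where $\{1,2\}=\{2,1\}$ as the supports of $M_{a,b,1}$ and $M_{a,b,2}$), which is exactly why the lemma assumes $n\ge 3$. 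Everything else is routine bookkeeping with traces of $2\times 2$ matrices.
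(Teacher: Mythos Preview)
The paper states this lemma without proof (the surrounding results are attributed to \cite{BuMe}), so there is no argument to compare against; your proof is correct and is the natural one. The key observation---that the $9n$ matrices $\sigma^{(a)}_j\sigma^{(b)}_{j+1}$ form an orthogonal family in the Hilbert--Schmidt inner product, each of squared norm $2^n$---follows exactly as you say from the tensor factorization of traces together with $\tr\sigma^{(a)}=0$ and $\tr(\sigma^{(a)}\sigma^{(b)})=2\delta_{ab}$, and your identification of the role of the hypothesis $n\ge 3$ (ensuring distinct cyclic edges have distinct vertex sets, so that some slot carries a bare Pauli factor) is precisely the point. Your observation that the map is in fact a scaled isometry, not merely Lipschitz, is a nice sharpening.
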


Lemma \ref{T:quantum-lipschitz} and Lemma
\ref{T:Lipschitz}\ref{P:distance-is-Lipschitz} together show that
\[
\mathbf{x}\mapsto W_1(\mu_n,\gamma)
\]
is a $\frac{1}{3\sqrt{n}}$-Lipschitz function of $\mathbf{x}$.  Part
\ref{P:quantum-distance-concentration} of Theorem \ref{T:quantum} then
follows from part \ref{P:quantum-expected-distance} and Proposition
\ref{T:Gaussian-concentration}, and part \ref{P:quantum-as-distance}
follows by the Borel--Cantelli lemma.

The proof of part \ref{P:quantum-expected-distance} has two main
components.  First, $W_1(\mu_n,\E\mu_n)$ is estimated via the approach
used in Sections \ref{S:sums} and \ref{S:compressions}: Lemma
\ref{T:quantum-lipschitz}, Lemma
\ref{T:Lipschitz}\ref{P:integral-is-Lipschitz}, and Proposition
\ref{T:Gaussian-concentration} show that the stochastic process
\[
X_f:=\int f \ d\mu_n-\E\int f\ d\mu_n
\]
satisfies a subgaussian increment condition as in Lemma
\ref{T:entropy}, which can then be used to show that.
\[
\E W_1(\mu_n, \E \mu_n) \le \frac{C}{n^{1/6}}.
\]

Second, the convergence in expectation proved in \cite{KLW} was done
via a pointwise estimate of the difference between the characteristic
functions of $\E \mu_n $ and $\gamma$; this estimate can be parlayed
into an estimate on $W_1(\E\mu_n,\gamma)$ via Fourier analysis.  This
is carried out in detail in \cite{BuMe} for the bounded-Lipschitz
distance; a similar argument shows that
\[
W_1(\E\mu_n,\gamma)\le\frac{C}{n^{1/6}},
\]
completing the proof of Theorem \ref{T:quantum}.

\section{The complex Ginibre ensemble}
\label{S:Ginibre}

Let $G_n$ be an $n \times n$ random matrix with i.i.d.\ standard
complex Gaussian entries; $G_n$ is said to belong to the \emph{complex
  Ginibre ensemble}.  It was first established by Mehta that if
$\mu_n$ is the empirical spectral measure of $\frac{1}{\sqrt{n}} G_n$,
then as $n \to \infty$, $\mu_n$ converges to the circular law; i.e.,
to the uniform measure $\mu$ on the unit disc
$D:=\Set{z \in \C}{\abs{z} \le 1}$.

This is the one ensemble we treat in which the general concentration
of measure approach does not apply.  The issue is that while there is
a concentration phenomenon for the i.i.d.\ Gaussian entries of $G_n$,
the spectral measure of a nonnormal matrix ($G_n$ is nonnormal with
probability 1) is not a Lipschitz function of the matrix.
Nevertheless, the eigenvalue process of $G_n$ is a determinantal point
process, and so some of the techniques used above are still available.
We sketch the basic idea below; full details can be found in
\cite{MM-Ginibre}

\medskip

The eigenvalues of $G_n$ form a determinantal point process on $\C$
with the kernel
\begin{equation} \label{E:DPP}
  \begin{split}
    K(z,w) & = \frac{1}{\pi} e^{-(\abs{z}^2 + \abs{w}^2) / 2}
    \sum_{k=0}^{n-1} \frac{(z\overline{w})^k}{k!}.
  \end{split}
\end{equation}
This means that in principle, the determinantal approach to eigenvalue
rigidity used in the case of the GUE (Section \ref{S:Wigner}) and of
the compact classical groups (Section \ref{S:groups}) can be used for
this model.  A challenge, however, is the lack of an obvious order
on the eigenvalues of an arbitrary matrix over $\C$; without one,
there is no hope to assign predicted locations around which the
individual eigenvalues concentrate.  We therefore impose an order on
$\C$ which is well-adapted for our purposes; we refer to this as the
\emph{spiral order}.  Specifically, the linear order $\prec$ on $\C$
is defined by making $0$ initial, and for nonzero $w, z \in \C$, we
declare $w \prec z$ if any of the following holds:
\begin{itemize}
\item $\lfloor \sqrt{n} \abs{w} \rfloor < \lfloor \sqrt{n} \abs{z} \rfloor$.
\item $\lfloor \sqrt{n} \abs{w} \rfloor = \lfloor \sqrt{n} \abs{z} \rfloor$ and
  $\arg w < \arg z$.
\item $\lfloor \sqrt{n} \abs{w} \rfloor = \lfloor \sqrt{n} \abs{z} \rfloor$,
  $\arg w = \arg z$, and $\abs{w} \ge \abs{z}$.
\end{itemize}
Here we are using the convention that $\arg z \in (0,2\pi]$.   

We order the eigenvalues according to $\prec$: first the eigenvalues
in the disc of radius $\frac{1}{\sqrt{n}}$ are listed in order of
increasing argument, then the ones in the annulus with inner radius
$\frac{1}{\sqrt{n}}$ and outer radius $\frac{2}{\sqrt{n}}$ in order of
increasing argument, and so on.
We then define predicted locations $\tilde{\lambda}_j$ for (most of)
the eigenvalues based on the spiral order: $\tilde{\lambda}_1 = 0$,
$\{\tilde{\lambda}_2, \tilde{\lambda}_3, \tilde{\lambda}_4\}$ are
$\frac{1}{\sqrt{n}}$ times the $3^{\mathrm{rd}}$ roots of unity (in
increasing order with respect to $\prec$), the next five are
$\frac{2}{\sqrt{n}}$ times the $5^{\mathrm{th}}$ roots of unity, and
so on.  Letting $\nu_n$ denote the normalized counting measure
supported on the $\{\tilde{\lambda}_j\}$, it is easy to show that
\[
W_2(\nu_n, \mu) \le \frac{C}{\sqrt{n}}.
\]
(In fact, there is a slight modification for about $\sqrt{n\log(n)}$
of the largest eigenvalues, the details of which we will not discuss
here.)

The same type of argument as in the earlier determinantal cases gives
a Bernstein-type inequality for the eigenvalue counting function on an
initial segment with respect to the spiral order, which in turn leads
to eigenvalue rigidity for most of the eigenvalues.  The largest
eigenvalues can be treated with a more elementary argument, leading
via the usual coupling argument to the bound
\[
\E W_2(\mu_n,\nu_n)\le C \left(\frac{\log(n)}{n}\right)^{1/4}.
\]
(One can deduce a slightly tighter bound for $\E W_p(\mu_n,\nu_n)$ for
$1 \le p < 2$, and a weaker one for $p > 2$.)

In this setting we cannot argue that the concentration of
$W_1(\mu_n,\mu)$ is immediate from general concentration properties of
the ensemble, but the eigenvalue rigidity itself can be used as a
substitute.  Indeed,
\[
W_2(\mu_n,\nu_n)^2 \le \frac{1}{n} \sum_{j=1}^n \abs{\lambda_j-\tilde{\lambda}_j}^2,
\]
and so 
\[
\Prob\left[W_2(\mu_n,\nu_n)^2 > t\right]
\le \Prob\left[\sum_{j=1}^n\abs{\lambda_j-\tilde{\lambda}_j}^2 > nt\right]
\le \sum_{j=1}^n
\Prob\left[\abs{\lambda_j-\tilde{\lambda}_j}^2 > t\right]. 
\]
For most of the eigenvalues the eigenvalue rigidity about
$\tilde{\lambda}_j$ is strong enough to bound this quite sharply; as
before, for about $\sqrt{n\log(n)}$ of the largest eigenvalues a
more trivial bound is used. Since this approach does
not produce a particularly clean tail inequality for
$W_2(\mu_n,\nu_n)$, we will instead simply state the almost-sure
convergence rate which follows by the Borel--Cantelli lemma.

\begin{thm}\label{T:as-rate}
  Let $\mu_n$ denote the empirical spectral measure of
  $\frac{1}{\sqrt{n}} G_n$, and let $\mu$ denote the uniform measure
  on the unit disc in $\C$. Then with probability $1$, for
  sufficiently large $n$,
  \[
  W_2(\mu_n, \mu) \le C \frac{\sqrt{\log n}}{n^{1/4}}.
  \]
  \end{thm}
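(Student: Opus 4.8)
The plan is to deduce the almost sure rate from the rigidity ingredients sketched above via a union bound and the Borel--Cantelli lemma. Since $W_2(\mu_n,\mu)\le W_2(\mu_n,\nu_n)+W_2(\nu_n,\mu)$ and $W_2(\nu_n,\mu)\le C/\sqrt{n}$, with $1/\sqrt{n}$ of smaller order than $\sqrt{\log n}/n^{1/4}$, it suffices to produce for each $n$ an event $\Omega_n$ whose complement has probability $p_n$ summable in $n$, on which $W_2(\mu_n,\nu_n)\le C\sqrt{\log n}/n^{1/4}$. Starting from $W_2(\mu_n,\nu_n)^2\le\frac{1}{n}\sum_{j=1}^n\abs{\lambda_j-\tilde{\lambda}_j}^2$, I would split $\{1,\dots,n\}$ into the $O(\sqrt{n\log n})$ \emph{edge} indices (those for which the discretization $\nu_n$ was modified and for which eigenvalue rigidity is unavailable) and the remaining \emph{bulk} indices, and bound the two corresponding partial sums separately.

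For the edge indices only a crude bound is used, on the event $E_n:=\{\norm{\frac{1}{\sqrt{n}}G_n}_{op}\le R\}$: there every eigenvalue of $\frac{1}{\sqrt{n}}G_n$ and every predicted location $\tilde{\lambda}_j$ lies in a disc of radius $O(1)$, so $\abs{\lambda_j-\tilde{\lambda}_j}\le C_0$ and
\[
\frac{1}{n}\sum_{j\text{ edge}}\abs{\lambda_j-\tilde{\lambda}_j}^2\le\frac{C_0^2\lceil\sqrt{n\log n}\rceil}{n}\le C_0^2\sqrt{\frac{\log n}{n}}.
\]
Since $\Prob[E_n^c]\le e^{-cn}$ by a standard $\eps$-net estimate (as used for Wishart matrices in Section~\ref{S:Wishart}), this probability is summable, and the edge indices contribute at most $C_0(\log n/n)^{1/4}\le C_0\sqrt{\log n}/n^{1/4}$ to $W_2(\mu_n,\nu_n)$ on $E_n$.

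For the bulk indices I would exploit the determinantal structure. For $z\in\C$ let $\mathcal{N}^{\prec}_z$ be the number of eigenvalues $w$ of $\frac{1}{\sqrt{n}}G_n$ with $w\preceq z$ in the spiral order; as the number of points of a determinantal process in an initial segment, $\mathcal{N}^{\prec}_z$ is a sum of independent Bernoulli random variables (cf.\ Proposition~\ref{T:GUE-DPP}), and the explicit kernel \eqref{E:DPP} yields a bound on $\var\mathcal{N}^{\prec}_z$ over all such segments; Bernstein's inequality then controls $\Prob[\abs{\mathcal{N}^{\prec}_z-\E\mathcal{N}^{\prec}_z}>t]$. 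The key observation is that for a bulk index $j$ the event $\{\abs{\lambda_j-\tilde{\lambda}_j}>r\}$ is contained in a union of a bounded number of events $\{\abs{\mathcal{N}^{\prec}_z-\E\mathcal{N}^{\prec}_z}>c\,r\sqrt{n}\}$ with $z$ within $O(r)$ of $\tilde{\lambda}_j$: one choice of points governs the radial (inter-annulus) part of the displacement and one the angular part within an annulus, and in either case a displacement of size $r$ forces a counting-function deviation of at least $c\,r\sqrt{n}$. Taking $r=C\sqrt{\log n}/n^{1/4}$ with $C$ large, so that $r\sqrt{n}=C\sqrt{\log n}\,n^{1/4}$, Bernstein gives $\Prob[\abs{\lambda_j-\tilde{\lambda}_j}>r]\le n^{-3}$ uniformly over the bulk; a union bound over the at most $n$ bulk indices shows that off an event of probability $\le n^{-2}$ all bulk $\abs{\lambda_j-\tilde{\lambda}_j}\le r$, hence $\frac{1}{n}\sum_{j\text{ bulk}}\abs{\lambda_j-\tilde{\lambda}_j}^2\le r^2=C^2\log n/\sqrt{n}$, contributing at most $C\sqrt{\log n}/n^{1/4}$ to $W_2(\mu_n,\nu_n)$. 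Setting $\Omega_n=E_n\cap\{\text{all bulk }\abs{\lambda_j-\tilde{\lambda}_j}\le r\}$ gives $p_n\le e^{-cn}+n^{-2}$ and $W_2(\mu_n,\nu_n)\le C'\sqrt{\log n}/n^{1/4}$ on $\Omega_n$, and the Borel--Cantelli lemma completes the proof.

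The main obstacle is this bulk step, where two points require care. First, one must extract from \eqref{E:DPP} a workable bound on $\var\mathcal{N}^{\prec}_z$ for spiral initial segments; unlike in the one-dimensional determinantal ensembles this variance grows with $n$, and it is precisely this growth that pins the final rate at $n^{-1/4}$, so one must also apply Bernstein's inequality in the correct regime. Second, one must carry out cleanly the geometric reduction of a planar displacement $\abs{\lambda_j-\tilde{\lambda}_j}>r$ — which, read through the spiral order, genuinely mixes a radial and an angular component — to deviations of honest counting functions, and track the separate, more delicate handling of the $O(\sqrt{n\log n})$ largest eigenvalues, for which the discretization $\nu_n$ itself was altered. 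Neither point affects the final exponents, only the constants.
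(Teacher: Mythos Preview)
Your proposal is correct and follows essentially the same route as the paper: spiral-order predicted locations $\tilde\lambda_j$, determinantal/Bernstein rigidity for the bulk indices, a cruder bound for the $O(\sqrt{n\log n})$ largest eigenvalues, and a union bound feeding into Borel--Cantelli. Your organization via the good event $\Omega_n$ with separate bulk and edge thresholds is a mild repackaging of the paper's union bound $\Prob[W_2^2>t]\le\sum_j\Prob[|\lambda_j-\tilde\lambda_j|^2>t]$, and your concrete choice of the operator-norm event $E_n$ (valid since the spectral radius is bounded by $\norm{\cdot}_{op}$) is a clean realization of the ``more trivial bound'' the paper alludes to for the edge.
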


\nocite{Chatterjee, Kargin, ChLe, NgWa}
\section*{Acknowledgements}

This research was partially supported by grants from the U.S. National Science Foundation
(DMS-1308725 to E.M.)  and the Simons
Foundation (\#315593 to M.M.).  This paper is an expansion of the first-named
author's talk at the excellent workshop ``Information Theory and
Concentration Phenomena'' at the Institute for Mathematics and its
Applications, as part of the IMA Thematic Year on Discrete Structures:
Analysis and Applications.  The authors thank the IMA for its hospitality.

\bibliographystyle{plain}
\bibliography{ima}

\begin{thebibliography}{10}

\bibitem{AnGuZe}
G.~W. Anderson, A.~Guionnet, and O.~Zeitouni.
\newblock {\em An Introduction to Random Matrices}, volume 118 of {\em
  Cambridge Studies in Advanced Mathematics}.
\newblock Cambridge University Press, Cambridge, 2010.

\bibitem{Bhatia}
R.~Bhatia.
\newblock {\em Matrix Analysis}, volume 169 of {\em Graduate Texts in
  Mathematics}.
\newblock Springer-Verlag, New York, 1997.

\bibitem{BuMe}
D.~Buzinski and E.~S. Meckes.
\newblock Almost sure convergence in quantum spin glasses.
\newblock {\em J. Math. Phys.}, 56(12), 2015.

\bibitem{Chatterjee}
S.~Chatterjee.
\newblock Concentration of {H}aar measures, with an application to random
  matrices.
\newblock {\em J. Funct. Anal.}, 245(2):379--389, 2007.

\bibitem{ChLe}
S.~Chatterjee and M.~Ledoux.
\newblock An observation about submatrices.
\newblock {\em Electron. Commun. Probab.}, 14:495--500, 2009.

\bibitem{Dallaporta1}
S.~Dallaporta.
\newblock Eigenvalue variance bounds for {W}igner and covariance random
  matrices.
\newblock {\em Random Matrices Theory Appl.}, 1(3):1250007, 28, 2012.

\bibitem{Dallaporta2}
S.~Dallaporta.
\newblock Eigenvalue variance bounds for covariance matrices.
\newblock {\em Markov Process. Related Fields}, 21(1):145--175, 2015.

\bibitem{DaSz}
K.~R. Davidson and S.~J. Szarek.
\newblock Local operator theory, random matrices and {B}anach spaces.
\newblock In {\em Handbook of the Geometry of {B}anach Spaces, {V}ol. {I}},
  pages 317--366. North-Holland, Amsterdam, 2001.

\bibitem{DiSh}
P.~Diaconis and M.~Shahshahani.
\newblock On the eigenvalues of random matrices.
\newblock {\em J. Appl. Probab.}, 31A:49--62, 1994.
\newblock Studies in applied probability.

\bibitem{Dudley}
R.~Dudley.
\newblock {V. N. S}udakov's work on expected suprema of {G}aussian processes.
\newblock To appear in the proceedings of High Dimensional Probability VII.

\bibitem{Dyson}
F.~J. Dyson.
\newblock Correlations between eigenvalues of a random matrix.
\newblock {\em Comm. Math. Phys.}, 19:235--250, 1970.

\bibitem{ES}
L.~Erd{\H{o}}s and D.~Schr{\"o}der.
\newblock Phase transition in the density of states of quantum spin glasses.
\newblock {\em Math. Phys. Anal. Geom.}, 17(3-4):441--464, 2014.

\bibitem{ErYa}
L.~Erd{\H{o}}s and H.-T. Yau.
\newblock Universality of local spectral statistics of random matrices.
\newblock {\em Bull. Amer. Math. Soc. (N.S.)}, 49(3):377--414, 2012.

\bibitem{ErYaYi}
L.~Erd{\H{o}}s, H.-T. Yau, and J.~Yin.
\newblock Rigidity of eigenvalues of generalized {W}igner matrices.
\newblock {\em Adv. Math.}, 229(3):1435--1515, 2012.

\bibitem{EvGa}
L.~C. Evans and R.~F. Gariepy.
\newblock {\em Measure theory and fine properties of functions}.
\newblock Studies in Advanced Mathematics. CRC Press, Boca Raton, FL, 1992.

\bibitem{FoRa}
P.~J. Forrester and E.~M. Rains.
\newblock Interrelationships between orthogonal, unitary and symplectic matrix
  ensembles.
\newblock In {\em Random matrix models and their applications}, volume~40 of
  {\em Math. Sci. Res. Inst. Publ.}, pages 171--207. Cambridge Univ. Press,
  Cambridge, 2001.

\bibitem{GoTi}
F.~G{\"o}tze and A.~Tikhomirov.
\newblock Optimal bounds for convergence of expected spectral distributions to
  the semi-circular law.
\newblock To appear in \textit{Probab. Theory Related Fields}.

\bibitem{GoTi05}
F.~G{\"o}tze and A.~Tikhomirov.
\newblock The rate of convergence for spectra of {GUE} and {LUE} matrix
  ensembles.
\newblock {\em Cent. Eur. J. Math.}, 3(4):666--704 (electronic), 2005.

\bibitem{Gozlan}
N.~Gozlan.
\newblock A characterization of dimension free concentration in terms of
  transportation inequalities.
\newblock {\em Ann. Probab.}, 37(6):2480--2498, 2009.

\bibitem{GoRoSa}
N.~Gozlan, C.~Roberto, and P.-M. Samson.
\newblock From dimension free concentration to the {P}oincar\'e inequality.
\newblock {\em Calc. Var. Partial Differential Equations}, 52(3-4):899--925,
  2015.

\bibitem{GrMi}
M.~Gromov and V.~D. Milman.
\newblock A topological application of the isoperimetric inequality.
\newblock {\em Amer. J. Math.}, 105(4):843--854, 1983.

\bibitem{GuZe}
A.~Guionnet and O.~Zeitouni.
\newblock Concentration of the spectral measure for large matrices.
\newblock {\em Electron. Comm. Probab.}, 5:119--136 (electronic), 2000.

\bibitem{Gustavsson}
J.~Gustavsson.
\newblock Gaussian fluctuations of eigenvalues in the {GUE}.
\newblock {\em Ann. Inst. H. Poincar\'e Probab. Statist.}, 41(2):151--178,
  2005.

\bibitem{HKPV06}
J.~B. Hough, M.~Krishnapur, Y.~Peres, and B.~Vir{\'a}g.
\newblock Determinantal processes and independence.
\newblock {\em Probab. Surv.}, 3:206--229, 2006.

\bibitem{Kargin}
V.~Kargin.
\newblock A concentration inequality and a local law for the sum of two random
  matrices.
\newblock {\em Probab. Theory Related Fields}, 154(3-4):677--702, 2012.

\bibitem{KaSa}
N.~M. Katz and P.~Sarnak.
\newblock {\em Random Matrices, {F}robenius Eigenvalues, and Monodromy},
  volume~45 of {\em American Mathematical Society Colloquium Publications}.
\newblock American Mathematical Society, Providence, RI, 1999.

\bibitem{KLW}
J.~P. Keating, N.~Linden, and H.~J. Wells.
\newblock Spectra and eigenstates of spin chain {H}amiltonians.
\newblock {\em Comm. Math. Phys.}, 338(1):81--102, 2015.

\bibitem{Ledoux-book}
M.~Ledoux.
\newblock {\em The Concentration of Measure Phenomenon}, volume~89 of {\em
  Mathematical Surveys and Monographs}.
\newblock American Mathematical Society, Providence, RI, 2001.

\bibitem{Ledoux-GAFA}
M.~Ledoux.
\newblock Deviation inequalities on largest eigenvalues.
\newblock In {\em Geometric Aspects of Functional Analysis}, volume 1910 of
  {\em Lecture Notes in Math.}, pages 167--219. Springer, Berlin, 2007.

\bibitem{Ledoux-g2}
M.~Ledoux.
\newblock $\gamma_2$ and {$\Gamma_2$}.
\newblock Unpublished note, available at\\
  \texttt{http://perso.math.univ-toulouse.fr/ledoux/files/2015/06/gGamma2.pdf},
  2015.

\bibitem{LeRi}
M.~Ledoux and B.~Rider.
\newblock Small deviations for beta ensembles.
\newblock {\em Electron. J. Probab.}, 15:no. 41, 1319--1343, 2010.

\bibitem{MaPa}
V.~A. Mar{\v{c}}enko and L.~A. Pastur.
\newblock Distribution of eigenvalues in certain sets of random matrices.
\newblock {\em Mat. Sb. (N.S.)}, 72 (114):507--536, 1967.

\bibitem{EM-JOTP}
E.~Meckes.
\newblock Approximation of projections of random vectors.
\newblock {\em J. Theoret. Probab.}, 25(2):333--352, 2012.

\bibitem{EM-GAFA}
E.~Meckes.
\newblock Projections of probability distributions: a measure-theoretic
  {D}voretzky theorem.
\newblock In {\em Geometric Aspects of Functional Analysis}, volume 2050 of
  {\em Lecture Notes in Math.}, pages 317--326. Springer, Heidelberg, 2012.

\bibitem{MM-compressions}
E.~S. Meckes and M.~W. Meckes.
\newblock Another observation about operator compressions.
\newblock {\em Proc. Amer. Math. Soc.}, 139(4):1433--1439, 2011.

\bibitem{MM-concentration}
E.~S. Meckes and M.~W. Meckes.
\newblock Concentration and convergence rates for spectral measures of random
  matrices.
\newblock {\em Probab. Theory Related Fields}, 156(1-2):145--164, 2013.

\bibitem{MM-powers}
E.~S. Meckes and M.~W. Meckes.
\newblock Spectral measures of powers of random matrices.
\newblock {\em Electron. Commun. Probab.}, 18:no. 78, 13, 2013.

\bibitem{MM-Ginibre}
E.~S. Meckes and M.~W. Meckes.
\newblock A rate of convergence for the circular law for the complex {G}inibre
  ensemble.
\newblock {\em Ann. Fac. Sci. Toulouse Math. (6)}, 24(1):93--117, 2015.

\bibitem{Mehta}
M.~L. Mehta.
\newblock {\em Random Matrices}, volume 142 of {\em Pure and Applied
  Mathematics (Amsterdam)}.
\newblock Elsevier/Academic Press, Amsterdam, third edition, 2004.

\bibitem{NgWa}
S.~Ng and M.~Walters.
\newblock A method to derive concentration of measure bounds on {M}arkov
  chains.
\newblock {\em Electron. Comm. Probab.}, 20(95), 2015.

\bibitem{ORourke}
S.~O'Rourke.
\newblock Gaussian fluctuations of eigenvalues in {W}igner random matrices.
\newblock {\em J. Stat. Phys.}, 138(6):1045--1066, 2010.

\bibitem{Ra03}
E.~M. Rains.
\newblock Images of eigenvalue distributions under power maps.
\newblock {\em Probab. Theory Related Fields}, 125(4):522--538, 2003.

\bibitem{Sp}
R.~Speicher.
\newblock Free convolution and the random sum of matrices.
\newblock {\em Publ. Res. Inst. Math. Sci.}, 29(5):731--744, 1993.

\bibitem{Talagrand}
M.~Talagrand.
\newblock {\em Upper and Lower Bounds for Stochastic Processes: Modern Methods
  and Classical Problems}, volume~60 of {\em Ergebnisse der Mathematik und
  ihrer Grenzgebiete. 3. Folge.}
\newblock Springer, Heidelberg, 2014.

\bibitem{TaVu2}
T.~Tao and V.~Vu.
\newblock Random matrices: universality of local eigenvalue statistics up to
  the edge.
\newblock {\em Comm. Math. Phys.}, 298(2):549--572, 2010.

\bibitem{TaVu1}
T.~Tao and V.~Vu.
\newblock Random matrices: universality of local eigenvalue statistics.
\newblock {\em Acta Math.}, 206(1):127--204, 2011.

\bibitem{TaVu-concentration}
T.~Tao and V.~Vu.
\newblock Random matrices: sharp concentration of eigenvalues.
\newblock {\em Random Matrices Theory Appl.}, 2(3):1350007, 31, 2013.

\bibitem{Tropp}
J.~A. Tropp.
\newblock An introduction to matrix concentration inequalities.
\newblock {\em Foundations and Trends in Machine Learning}, 8(1-2), 2015.

\bibitem{Vershynin}
R.~Vershynin.
\newblock Introduction to the non-asymptotic analysis of random matrices.
\newblock In {\em Compressed Sensing}, pages 210--268. Cambridge Univ. Press,
  Cambridge, 2012.

\bibitem{Villani1}
C.~Villani.
\newblock {\em Topics in Optimal Transportation}, volume~58 of {\em Graduate
  Studies in Mathematics}.
\newblock American Mathematical Society, Providence, RI, 2003.

\bibitem{Vo}
D.~Voiculescu.
\newblock Limit laws for random matrices and free products.
\newblock {\em Invent. Math.}, 104(1):201--220, 1991.

\bibitem{Wigner1}
E.~Wigner.
\newblock Characteristic vectors of bordered matrices with infinite dimensions.
\newblock {\em Ann. of Math. (2)}, 62:548--564, 1955.

\bibitem{Wigner2}
E.~Wigner.
\newblock On the distribution of the roots of certain symmetric matrices.
\newblock {\em Ann. of Math. (2)}, 67:325--327, 1958.

\end{thebibliography}

\end{document}